\theoremstyle{plain}
\newtheorem*{theorem*}{Theorem}
\newtheorem*{corollary*}{Corollary}
\newtheorem*{lemma*}{Lemma}
\newtheorem*{property*}{Property}
\newtheorem{proposition*}{Proposition}
\newtheorem*{statement*}{Statement}
\newtheorem{lemma}{Lemma}
\newtheorem{proposition}{Proposition}
\theoremstyle{definition}
\newtheorem*{definition*}{Definition}
\newtheorem*{remark*}{Remark}
\newtheorem*{example*}{Example}
\newtheorem{example}{Example}
\renewcommand{\@biblabel}[1]{#1.\hfill}
\renewcommand{\title}[1]{
\begin{center}
{\bfseries #1}
\end{center}}
\renewcommand{\author}[1]
{\begin{center} {\bfseries{\copyright~~~#1}}
\end{center}}
\renewcommand{\address}[1]
{\begin{center}
\tiny
{\scshape{#1}}
\end{center}\medskip}
\begin{document}

\pagenumbering{arabic}

\thispagestyle{empty}

\markboth{{\footnotesize{\it \textbf{I.\,I.~Karpenko, D.\,L.~Tyshkevich}}}} {{\footnotesize {\it \textbf{1--D Schr\"odinger Operators With $\delta$--interactions}}}}

\medskip
\normalsize

\title{ON SELF-ADJOINTNESS OF 1--D SCHR\"ODINGER OPERATORS WITH $\delta$--INTERACTIONS}

\author{I.\,I.~Karpenko}
\address{Tavrida National V.\,I.~Vernadsky University, 4 Academician Vernadsky Ave, Simferopol, 95007, Ukraine}
\email{i\_karpenko@ukr.net}

\author{D.\,L.~Tyshkevich}
\address{Tavrida National V.\,I.~Vernadsky University, 4 Academician Vernadsky Ave, Simferopol, 95007, Ukraine}
\email{dtyshk@inbox.ru}

\subsection*{Abstract}
In the present work we consider in $L^2(\mathbb{R}_+)$ the Schr\"odinger operator
$\mathrm{H_{X,\alpha}}=-\mathrm{\frac{d^2}{dx^2}}+\sum_{n=1}^{\infty}\alpha_n\delta(x-x_n)$.
We investigate and complete the conditions of self-adjointness and
nontriviality of deficiency indices for $\mathrm{H_{X,\alpha}}$
obtained in \cite{karpiiKost}. We generalize  the conditions
found earlier in the special case $d_n:=x_{n}-x_{n-1}=1/n$, $n\in
\mathbb{N}$, to a wider class of sequences $\{x_n\}_{n=1}^\infty$.
Namely, for $x_n=\frac{1}{n^{\gamma}\ln^\eta n}$ with
$\langle\gamma,\eta\rangle\in(1/2,\,1)\!\times\!(-\infty,+\infty)\:\cup\:\{1\}\!\times\!(-\infty,1]$,
the description of asymptotic behavior of the sequence
$\{\alpha_n\}_{n=1}^{\infty}$ is obtained for
$\mathrm{H_{X,\alpha}}$ either to be self--adjoint or to have
nontrivial deficiency indices.

\subsection*{Keywords}
Schr\"odinger operator, local point interaction,
self-adjoint operator, deficiency indices, Jacobi matrices

\subsection*{2000 Mathematics Subject Classification (MSC2000)}
34L40, 47E05, 47B25, 47B36, 81Q10

\section{Introduction}

Let $X=\{x_n\}_{n=0}^{\infty}$ be a strictly increasing sequence of nonnegative numbers,
$x_0=0$, and $\lim_{n\rightarrow\infty}x_n=\infty.$ Let also
$\alpha=\{\alpha_n\}_{1}^{\infty}$ be a sequence of real numbers.

The differential expression
\begin{equation}\label{EqlXalpha}
   l_{X,\alpha}:=-\mathrm{\frac{d^2}{dx^2}}+\sum_{n=1}^{\infty}\alpha_n\delta(x-x_n)
\end{equation}
on $L^2(0,+\infty)$ is connected with the symmetric differential operator
\begin{equation}\label{EqH0Xalpha}
  \mathrm{H^0_{X,\alpha}}:=-\mathrm{\frac{d^2}{dx^2}}
\end{equation}
with domain
\begin{equation}\label{EqdomH0Xalpha}
   \begin{split}
      \mathrm{dom(H^0_{X,\alpha})}=
          \big\{f\in W^{2,2}(\mathbb{R}_+\setminus X)\cap L^2_{\mathrm{comp}}(\mathbb{R}_+)\ |\
      &         f'(0)=0,\:
                \\
      &         f'(x_n{\scriptscriptstyle+})-f'(x_n{\scriptscriptstyle-})=\alpha_n f(x_n)
          \big\}.
  \end{split}
\end{equation}
Denote by $\mathrm{H_{X,\alpha}}$ the closure of the operator
$\mathrm{H^0_{X,\alpha}}$.

Schr\"odinger operators  with distributional potentials  have
attracted considerable interest in the last decades, in
particular, because they can be used as solvable models in many
situations, see \cite{karpiiAlb_Ges88}~--- \cite{karpiiBra85},
\cite{karpiiBusch9}, \cite{karpiiGesz6}, \cite{karpiiOgur},
\cite{karpiiShub45}. For instance, the operator
$\mathrm{H^0_{X,\alpha}}$ can be regarded as a Hamiltonian for a
$\delta$--interaction at points $x_n$ with intensity $\alpha_n$.
In the general case, the operator $\mathrm{H_{X,\alpha}}$ does not
need to be self-adjoint. One of important problems in the spectral
analysis of this operator is to find necessary and sufficient
conditions for the operator  $\mathrm{H_{X,\alpha}}$  to be
self-adjoint. The affirmative answer to this problem has recently
been obtained  in  the case of lower semi-bounded Hamiltonians.
Namely, it is proved in \cite{karpiiAKM10} (see laso
\cite{karpiiHryn}) that $\mathrm{H_{X,\alpha}}$ is always
self-adjoint provided that it is lower semi-bounded.

Spectral properties of the operator $\mathrm{H_{X,\alpha}}$ depend
on both the sequence $\alpha$ and the sequence $X$. In the latter
case, the behavior of the sequence
\begin{equation}\label{EqDefdn}
   d_n:=x_n-x_{n-1},\qquad n\in\mathbb{N},
\end{equation}
is an important characteristic. In particular, if
$d_{*}:=\mathrm{inf}_{n\in \mathbb{N}}d_n>0,$ then the operator
$\mathrm{H_{X,\alpha}}$ is \textit{always self-adjoint}
\cite{karpiiGesz6}.

This result is sharp  in the sense that there is a sequence $d_n$
satisfying $\lim_{n\to\infty}d_n=0$ such that the Hamiltonian
$\mathrm{H_{X,\alpha}}$ has nontrivial deficiency indices for some
sequences $\alpha\subset \mathbb{R}$. Namely, C.~Shubin~Christ and
G.~Stolz have shown in \cite{karpiiShub45} that
$n_{\pm}(\mathrm{H_{X, \alpha}})= 1$ if $d_n=1/n$ and
$\alpha_n=-2n-1$, $n\in \mathbb{N}$. Thus the case $d_{*}=0$ is
fundamentally different from the case $d_{*}>0$ since nontrivial
deficiency indices can be realized there.

In \cite{karpiiKost}, A.\,S.~Kostenko and M.\,M.~Malamud studied
the Hamiltonian  $\mathrm{H_{X,\alpha}}$ in the framework of
boundary triplets and the corresponding Weyl function. Such an
approach to the theory of extensions of symmetric operators was
initiated about thirty years ago and still continues to actively
develop, see \cite{karpiiBruen8}~--- \cite{karpiiGorb20}.

Using a corresponding boundary triple, the authors in
\cite{karpiiKost} have parametrized the set of Hamiltonians
$\mathrm{H_{X,\alpha}}$ with certain classes of Jacobi matrices
(three-diagonal matrices). It was also found there that spectral
properties of the Hamiltonian  $\mathrm{H_{X,\alpha}}$ are closely
linked with the same properties of the corresponding Jacobi
matrix,
\begin{equation}\label{EqJacobyMatr}
   B_{X,\alpha}=
   \begin{pmatrix}
      r_1^{-2}(\alpha_1+\frac{1}{d_1}+\frac{1}{d_2}) & -r_1^{-1}r_2^{-1}d_2^{-1}
                                                              & 0                                              & \ldots  \\
     -r_1^{-1}r_2^{-1}d_2^{-1}                       & r_2^{-2}(\alpha_2+\frac{1}{d_2}+\frac{1}{d_3})
                                                              & -r_2^{-1}r_3^{-1}d_3^{-1}                      & \ldots  \\
       0                                             & -r_2^{-1}r_3^{-1}d_3^{-1}
                                                              & r_3^{-2}(\alpha_3+\frac{1}{d_3}+\frac{1}{d_4}) & \ldots \\
      \ldots                                         & \ldots
                                                              & \ldots                                         & \ldots
   \end{pmatrix},
\end{equation}
where
\begin{equation}\label{EqDefrn}
   r_n=\sqrt{d_n+d_{n+1}},\qquad n\in \mathbb{N}.
\end{equation}
As it turned out the deficiency indices for
$\mathrm{H_{X,\alpha}}$ and $B_{X,\alpha}$ coincide,
$n_{\pm}(\mathrm{H_{X,\alpha}})=n_{\pm}( B_{X,\alpha})$
(\cite[Theorem 5.4]{karpiiKost}) and, consequently,
$n_{\pm}(\mathrm{H_{X,\alpha}})\leq
 1$, see \cite{karpiiBusch9}.
In particular, $\mathrm{H_{X,\alpha}}$ is self-adjoint if and only if the matrix
$B_{X,\alpha}$ is self-adjoint.

Using the Carleman criterion A.\,S.~Kostenko and M.\,M.~Malamud have
obtained the following result \cite[Proposition 5.7]{karpiiKost}:

\medskip
\begin{center}\
\parbox{0.95\textwidth}
{
\textit{if $\sum_{n=1}^{\infty}d_n^2=\infty$, then the operator
$\mathrm{H_{X,\alpha}}$ is self-adjoint for any sequence $\alpha\subset \mathbb{R}$.}
}
\end{center}

In comparison with the  mentioned result from \cite{karpiiAKM10},
\cite{karpiiHryn}, this statement from \cite{karpiiKost} gives
new information only for not lower semi-bounded Hamiltonians
$\mathrm{H_{X,\alpha}}$.

\medskip
\noindent Clearly, the result on self-adjointness of the operator
$\mathrm{H_{X,\alpha}}$ for $d_{*}:=\mathrm{inf}_{n\in
\mathbb{N}}d_n>0$ is a particular case of the latter statement.

Moreover, the example of Shubin Christ and Stolz in
\cite{karpiiShub45} was significantly specified in
\cite{karpiiKost}, namely,

\medskip
\begin{center}\
\parbox{0.9\textwidth}
{
\textit{if $d_n=1/n$, then:}\\
\textbf{1.}\;%
\textit{$n_{\pm}(\mathrm{H_{X,\alpha}})= 0$ if
$\alpha_n\leq-2(2n+1)+\frac{C_1}{n},\ C_1>0,$ or $\alpha_n\geq
-\frac{C_2}{n},\ C_2>0$};\label{Pagesadj1n}

\textbf{2.}\;%
\textit{$n_{\pm}(\mathrm{H_{X,\alpha}})=1$ if $\alpha_n=a(2n+1)+O(1/n),\ a\in (-2,0)$.}
}
\end{center}

\medskip
\noindent Note that the estimates in \textbf{1} follow from the
estimates in \textbf{(ii)}, \textbf{(iii)} that were obtained for
a more general case of $\{d_n\}_1^{\infty}\in \ell_2\setminus
\ell_1$ in \cite[Proposition 5.11]{karpiiKost} with the use of
sufficient conditions for a Jacobi matrix to be self-adjoint.
\begin{proposition}\label{PropMalKost511}\cite[Prop.\,5.11]{karpiiKost}
The operator $\mathrm{H_{X,\alpha}}$ is self-adjoint on
$L^2(\mathbb{R}_+)$ if the sequence
$\alpha=\{\alpha_n\}_1^{\infty}$ and the sequences
$\{d_n\}_1^{\infty},\ \{r_n\}_1^{\infty}$ defined by
\eqref{EqDefdn} and \eqref{EqDefrn}, correspondingly, satisfy one
of the following conditions:
\begin{enumerate}
   \item[\textbf{(i)}] $\sum_{n=1}^{\infty}|\alpha_n| d_nd_{n+1}r_{n-1}r_{n+1}=\infty$;
   \item[\textbf{(ii)}] there exists a constant $C_1>0$ such that
         \begin{equation*}
            \alpha_n+\frac{1}{d_n}\Bigl(1+\frac{r_n}{r_{n-1}}\Bigr)+
            \frac{1}{d_{n+1}}\Bigl(1+\frac{r_n}{r_{n+1}}\Bigr)\leq
            C_1(d_n+d_{n+1}), \qquad n\in \mathbb{N};
         \end{equation*}
   \item[\textbf{(iii)}] there exists a constant $C_2>0$ such that
      \begin{equation*}
          \alpha_n+\frac{1}{d_n}\Bigl(1-\frac{r_n}{r_{n-1}}\Bigr)+
          \frac{1}{d_{n+1}}\Bigl(1-\frac{r_n}{r_{n+1}}\Bigr)\geq
          -C_2(d_n+d_{n+1}), \qquad n\in \mathbb{N}.
          \end{equation*}
\end{enumerate}
\end{proposition}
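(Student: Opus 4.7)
By Theorem~5.4 of \cite{karpiiKost}, $n_\pm(\mathrm{H_{X,\alpha}})=n_\pm(B_{X,\alpha})$, so it suffices to prove essential self-adjointness of the Jacobi matrix $B_{X,\alpha}$ from \eqref{EqJacobyMatr} on the span of the canonical basis. Writing its diagonal and subdiagonal entries as $a_n := r_n^{-2}(\alpha_n+d_n^{-1}+d_{n+1}^{-1})$ and $b_n := -(r_n r_{n+1}d_{n+1})^{-1}$, one has $|b_n|=(r_n r_{n+1}d_{n+1})^{-1}$ and $r_n^2=d_n+d_{n+1}$ in view of \eqref{EqDefrn}. The strategy is to translate each of \textbf{(i)}, \textbf{(ii)}, \textbf{(iii)} into a classical sufficient condition for essential self-adjointness of $B_{X,\alpha}$.

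For cases \textbf{(ii)} and \textbf{(iii)}, the key arithmetic observation is that $(1\pm r_n/r_{n-1})/(d_n r_n^2)=1/(d_n r_n^2)\pm|b_{n-1}|$ and analogously for the second summand. Dividing the hypothesized inequality by $r_n^2$ therefore collapses it into the diagonal-dominance estimate
\[
a_n+|b_{n-1}|+|b_n|\le C_1 \quad\text{under (ii)},\qquad a_n-|b_{n-1}|-|b_n|\ge -C_2 \quad\text{under (iii)}.
\]
Applying the elementary bound $2|b_n\,\mathrm{Re}(\overline{u_n}u_{n+1})|\le|b_n|(|u_n|^2+|u_{n+1}|^2)$ termwise to a finitely supported vector $u$ yields $\langle B_{X,\alpha}u,u\rangle\le C_1\|u\|^2$ and $\langle B_{X,\alpha}u,u\rangle\ge -C_2\|u\|^2$ respectively. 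Essential self-adjointness is then read off from the classical theorem that a Jacobi matrix which, after a suitable constant shift, is positive and diagonally dominant on finite sequences is essentially self-adjoint.

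For \textbf{(i)}, the identity $d_n d_{n+1}r_{n-1}r_{n+1}=(r_n^2|b_{n-1}||b_n|)^{-1}$ recasts the hypothesis as $\sum |\alpha_n|/(r_n^2|b_{n-1}||b_n|)=\infty$; in the regime where $|\alpha_n|$ is comparable to $r_n^2|a_n|$ this is the Carleman-type divergence $\sum |a_n|/(|b_{n-1}||b_n|)=\infty$, a Kostyuchenko--Mirzoev-type criterion that forces essential self-adjointness of $B_{X,\alpha}$.

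The step that requires the most care is the invocation of the right Jacobi-matrix self-adjointness theorem in cases \textbf{(ii)} and \textbf{(iii)}: for an unbounded Jacobi matrix a purely form-theoretic semiboundedness estimate is not sufficient, so one must exploit that the displayed inequalities are genuine entry-wise diagonal-dominance conditions, which---via the three-term recursion for the deficiency equation $(B_{X,\alpha}-\lambda)u=0$ with $\lambda\notin\mathbb{R}$---rules out $\ell^2$ solutions. Once the correct classical criterion is cited, the remaining verifications are straightforward manipulations of \eqref{EqDefdn} and \eqref{EqDefrn}.
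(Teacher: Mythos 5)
The paper does not actually prove this proposition; it is imported verbatim from \cite[Prop.~5.11]{karpiiKost}, so your proposal can only be judged on its own merits. Your overall route is the right one: reduce to $B_{X,\alpha}$ via $n_\pm(\mathrm{H_{X,\alpha}})=n_\pm(B_{X,\alpha})$, and your algebra is correct --- with $a_n=r_n^{-2}(\alpha_n+d_n^{-1}+d_{n+1}^{-1})$ and $|b_n|=(r_nr_{n+1}d_{n+1})^{-1}$, conditions \textbf{(ii)} and \textbf{(iii)} are exactly $a_n+|b_{n-1}|+|b_n|\le C_1$ and $a_n-|b_{n-1}|-|b_n|\ge -C_2$. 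You are also right to flag that the form bound $\langle B u,u\rangle\ge -C_2\|u\|^2$ alone does not give self-adjointness of an unbounded Jacobi matrix; the entry-wise dominance is what matters, and the recursion argument you gesture at does work (after shifting so that $a_n\ge|b_{n-1}|+|b_n|+1$, the solution of $Bu=0$ with $u_1=1$ satisfies $u_{n+1}\ge(1+|b_n|^{-1})u_n\ge 1$, hence is not in $\ell^2$, and invariance of the limit-circle case forces deficiency indices $(0,0)$). That step should be written out rather than merely named, but it is not a gap in the idea.

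The genuine gap is in case \textbf{(i)}. The Dennis--Wall-type test you invoke requires $\sum|a_n|/(|b_{n-1}||b_n|)=\sum\bigl|\alpha_n+\tfrac{1}{d_n}+\tfrac{1}{d_{n+1}}\bigr|\,d_nd_{n+1}r_{n-1}r_{n+1}=\infty$, whereas the hypothesis controls $\sum|\alpha_n|\,d_nd_{n+1}r_{n-1}r_{n+1}$. Your phrase ``in the regime where $|\alpha_n|$ is comparable to $r_n^2|a_n|$'' silently adds a hypothesis that can fail: if $\alpha_n$ is close to $-(d_n^{-1}+d_{n+1}^{-1})$ then $a_n\approx 0$ while $|\alpha_n|$ is huge, so the two series are not termwise comparable. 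The fix is to show the two divergence conditions are nevertheless equivalent: the discrepancy term is $\bigl(\tfrac{1}{d_n}+\tfrac{1}{d_{n+1}}\bigr)d_nd_{n+1}r_{n-1}r_{n+1}=r_n^2r_{n-1}r_{n+1}\le\tfrac12\bigl(r_n^2r_{n-1}^2+r_n^2r_{n+1}^2\bigr)$, which is summable whenever $\{d_n\}\in\ell_2$; and when $\{d_n\}\notin\ell_2$ the Carleman test already applies, since $\sum|b_n|^{-1}=\sum r_nr_{n+1}d_{n+1}\ge\sum d_{n+1}^2=\infty$. Without this dichotomy your argument for \textbf{(i)} only covers a special regime of $\alpha$.
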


In this paper, we continue the study of the conditions obtained in
\cite{karpiiKost} for the Schr\"odinger operator
$\mathrm{H_{X,\alpha}}$ to be self-adjoint or to have nontrivial
deficiency indices. It turned out that these conditions found for
$d_n=1/n$ can be generalized to a broader class of sequences, see
Propositions~\ref{PropAsympt}, \ref{PropGendDiffable},
\ref{PropGendDiffable2}. For example, for a class of the sequences
$\Big\{\dfrac{1}{n^{\gamma}\ln^\eta n}\Big\}_2^{\infty}$ that
belong to $\ell_2\setminus \ell_1$ if
$\langle\gamma,\eta\rangle\in
(1/2,\,1)\!\times\!(-\infty,+\infty)\:\cup\:\{1\}\!\times\!(-\infty,1],$
we obtain a description for the asymptotic behavior of the
sequence $\alpha$ such that the operator $\mathrm{H_{X,\alpha}}$
would either be self-adjoint or have nontrivial deficiency
indices.

 \section{Sufficient conditions for self-adjointness of the operator $\mathrm{H_{X,\alpha}}$}

Taking into account the above considerations, we will mention some
properties of sequences $\{d_n\}_1^{\infty}\in \ell_2\setminus
\ell_1$ of positive numbers. The most important of them is the
property:
\begin{equation}\label{EqFromDalamb}
   \underset{n\rightarrow\infty}{\lim\ \inf}\frac{d_{n+1}}{d_n}
   \leq 1\leq
   \underset{n\rightarrow\infty}{\lim\ \sup}\frac{d_{n+1}}{d_n},
\end{equation}
which follows from the d'Alembert test for series.
This immediately implies that if there exists the
$\lim_{n\rightarrow\infty}\frac{d_{n+1}}{d_n}$
for a sequence $\{d_n\}_1^{\infty}\in \ell_2\setminus \ell_1$ of positive numbers,
then this limit equals 1. Using \eqref{EqFromDalamb} and making certain
restrictions on the sequence $\{d_n\}_1^{\infty}$ we can significantly simplify
the sufficient conditions of self-adjointness in \textbf{(i)}~---
\textbf{(iii)}.

\begin{proposition}\label{Propdn3}
Given a sequence $\alpha=\{\alpha_n\}_1^{\infty}$. Let also the
sequence $\{d_n\}_1^{\infty}\in \ell_2\setminus \ell_1$ of
positive numbers defined by \eqref{EqDefdn} satisfy the
relation:
\begin{equation}\label{EqdnRestrns}
    \underset{n\rightarrow\infty}{\lim\ \inf}\frac{d_{n+1}}{d_n}>0.
\end{equation}
Then $\mathrm{H}_{X,\alpha}$ is self-adjoint provided that there
holds the following condition:
\begin{enumerate}
\item[\textbf{(I)}]\;
   $\sum_{n=1}^{\infty}|\alpha_n|d_n^3=\infty.$
\end{enumerate}
\end{proposition}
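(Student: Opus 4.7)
The plan is to reduce the statement to condition \textbf{(i)} of Proposition~\ref{PropMalKost511}, which gives self-adjointness as soon as $\sum_{n}|\alpha_n|\,d_n d_{n+1} r_{n-1} r_{n+1} = \infty$. Since hypothesis \textbf{(I)} asserts $\sum_n |\alpha_n|\,d_n^3 = \infty$, it suffices to establish a pointwise lower bound
\[
   d_n\,d_{n+1}\,r_{n-1}\,r_{n+1} \geq C\, d_n^3
\]
for some $C>0$ and all sufficiently large $n$; summing against $|\alpha_n|$ then transfers the divergence from \textbf{(I)} to the one required by \textbf{(i)}, completing the proof.

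To build this bound I would first invoke \eqref{EqdnRestrns} to fix a constant $c>0$ and an index $N\in \mathbb{N}$ such that $d_{n+1}\geq c\,d_n$ for every $n\geq N$. The definition \eqref{EqDefrn} together with the elementary inequality $\sqrt{a+b}\geq \sqrt{a}$ for $a,b\geq 0$ immediately yields $r_{n-1}=\sqrt{d_{n-1}+d_n}\geq \sqrt{d_n}$ and $r_{n+1}=\sqrt{d_{n+1}+d_{n+2}}\geq \sqrt{d_{n+1}}\geq \sqrt{c}\,\sqrt{d_n}$. Combined with $d_{n+1}\geq c\,d_n$, this produces $d_n d_{n+1} r_{n-1} r_{n+1}\geq c^{3/2}\,d_n^3$ for all $n\geq N$, which is the required pointwise estimate with $C = c^{3/2}$.

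There is no serious obstacle: the whole argument reduces to a short inequality chain once condition \textbf{(i)} of Proposition~\ref{PropMalKost511} is taken as a black box. The ambient assumption $\{d_n\}\in \ell_2\setminus \ell_1$ is only used to place the statement in the regime covered by Proposition~\ref{PropMalKost511}; it does not enter the reduction explicitly. The crucial hypothesis is \eqref{EqdnRestrns}: it prevents $d_{n+1}$ from being arbitrarily small compared to $d_n$, which is precisely what forces each $r$-factor to be bounded below by a constant multiple of $\sqrt{d_n}$. Without such a one-sided ratio bound the constant $C$ could collapse and the reduction would fail.
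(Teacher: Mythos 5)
Your proof is correct and follows essentially the same route as the paper: reduce to condition \textbf{(i)} of Proposition~\ref{PropMalKost511} via the pointwise bound $d_n d_{n+1} r_{n-1} r_{n+1} \geq C d_n^3$, obtained from \eqref{EqdnRestrns} and the trivial lower bounds on $r_{n-1}$, $r_{n+1}$. Your version is in fact slightly more careful than the paper's in restricting the ratio bound $d_{n+1}\geq c\,d_n$ to $n\geq N$ (which is all that $\liminf>0$ literally gives, and all that is needed since divergence is a tail property).
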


\begin{proof}[Proof]
In fact, condition \eqref{EqdnRestrns} implies that there exists a $C>0$ such that
$$
d_{n+1}>Cd_n,\qquad n\in \mathbb{N}.
$$
It follows that
\begin{equation}\label{EqPropdn32}
\begin{split}
  & d_nd_{n+1}r_{n-1}r_{n+1}=
    d_nd_{n+1}\sqrt{d_{n-1}+d_n}\sqrt{d_{n+1}+d_{n+2}}>\\
  & d_n(Cd_n)\sqrt{d_n}\
  \sqrt{Cd_{n}+C^2d_{n}}>C\sqrt{C+C^2}d_n^3,
   \qquad n\in\mathbb{N}.
\end{split}
\end{equation}
Thus, the divergence of series \textbf{(I)} implies the divergence
of series \textbf{(i)} in Proposition~\ref{PropMalKost511}, and
there holds the sufficient condition for $\mathrm{H}_{X,\alpha}$
to be self-adjoint.
\end{proof}

Note that, for the class of sequences $\{d_n\}_1^{\infty}\in
\ell_2\setminus \ell_1$ satisfying the additional constraint
\begin{equation}\label{EqdnRestrns_1}
   0<\underset{n\rightarrow\infty}{\lim\ \inf}\frac{d_{n+1}}{d_n}\leq
   \underset{n\rightarrow\infty}{\lim\ \sup}\frac{d_{n+1}}{d_n}<\infty,
\end{equation}
both series \textbf{(I)} and \textbf{(i)} converge and diverge
simultaneously. Hence test \textbf{(i)} as well as
Proposition~\ref{Propdn3} can be applied for such sequences.

In the next assertion, we present the conditions sufficient for
tests \textbf{(ii)}, \textbf{(iii)} of
Proposition~\ref{PropMalKost511} to hold (for now, without any
additional restrictions on the sequence $\{d_n\}_1^{\infty}$).
These conditions will allow us to find simpler sufficient
conditions for the Hamiltonian $\mathrm{H}_{X,\alpha}$ to be
self-adjoint.

\begin{proposition} \label{PropFGgen}
Let sequences $\{d_n\}_1^{\infty}\in \ell_2\setminus \ell_1$ and
$\{r_n\}_1^{\infty}$ be defined by \eqref{EqDefdn}, \eqref{EqDefrn}, and let the function
\begin{equation}\label{EqFDef}
  F(n)=\frac{1}{d_{n}}\Bigl(\frac{r_n}{r_{n-1}}-1\Bigr)
  +\frac{1}{d_{n+1}}\Bigl(\frac{r_n}{r_{n+1}}-1\Bigr)
  \quad(n\in\mathbb{N})
\end{equation}
allow the representation of the form:
\begin{equation}\label{EqFLessG}
    F(n)= G(n)+O(d_n)
   \quad(n\in\mathbb{N})
\end{equation}
(for definiteness, we put $r_0:=1$). Then the Hamiltonian
$\mathrm{H}_{X,\alpha}$ is self-adjoint provided that
one of the following conditions hold%
\footnote{ As we can see from the proof of
Proposition~\ref{PropFGgen}, under $O(d_n)$ in \eqref{EqFLessG}
and \textbf{(II)} can be meant different sequences.}:
\begin{enumerate}
\item[\textbf{(II)}]\;there exists a constant $C_1>0$ such that
$$
\alpha_n\leq-\Bigl(\dfrac{2}{d_n}+\dfrac{2}{d_{n+1}}+
                       G(n)\Bigr)+C_1d_n\quad(n\in\mathbb{N});
$$
\item[\textbf{(III)}]\; there exists a constant $C_2>0$ such that
                       $\alpha_n\geq G(n)-C_2d_n\quad(n\in\mathbb{N})$.
\end{enumerate}
\end{proposition}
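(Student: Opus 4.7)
The plan is to show that conditions \textbf{(II)} and \textbf{(III)} are, after a direct rewriting, just conditions \textbf{(ii)} and \textbf{(iii)} of Proposition~\ref{PropMalKost511} applied in our setting, modulo absorption of the $O(d_n)$ terms into the constants on the right-hand sides. So the route is: reduce to Proposition~\ref{PropMalKost511} by a one-line algebraic identity.

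First I would record the trivial identities
\begin{equation*}
   \frac{1}{d_n}\Bigl(1+\frac{r_n}{r_{n-1}}\Bigr)
   =\frac{2}{d_n}+\frac{1}{d_n}\Bigl(\frac{r_n}{r_{n-1}}-1\Bigr),
   \qquad
   \frac{1}{d_n}\Bigl(1-\frac{r_n}{r_{n-1}}\Bigr)
   =-\frac{1}{d_n}\Bigl(\frac{r_n}{r_{n-1}}-1\Bigr),
\end{equation*}
together with their analogues for the index $n+1$. Substituting these into the left-hand sides of conditions \textbf{(ii)} and \textbf{(iii)} of Proposition~\ref{PropMalKost511} and recognising the definition \eqref{EqFDef} of $F(n)$, I would obtain the reformulated equivalents
\begin{equation*}
   \alpha_n+\frac{2}{d_n}+\frac{2}{d_{n+1}}+F(n)\le C_1(d_n+d_{n+1})
   \qquad\text{and}\qquad
   \alpha_n-F(n)\ge -C_2(d_n+d_{n+1}).
\end{equation*}

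Next I would insert the representation $F(n)=G(n)+O(d_n)$ supplied by \eqref{EqFLessG}. For \textbf{(II)}, hypothesis $\alpha_n\le-\bigl(\tfrac{2}{d_n}+\tfrac{2}{d_{n+1}}+G(n)\bigr)+C_1d_n$ yields
\begin{equation*}
   \alpha_n+\frac{2}{d_n}+\frac{2}{d_{n+1}}+F(n)\le -G(n)+C_1d_n+G(n)+O(d_n)=O(d_n),
\end{equation*}
which is bounded above by a constant multiple of $d_n+d_{n+1}$; this is exactly \textbf{(ii)} of Proposition~\ref{PropMalKost511} (with a possibly enlarged constant $C_1'$, in accordance with the footnote). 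Symmetrically, for \textbf{(III)}, hypothesis $\alpha_n\ge G(n)-C_2d_n$ gives
\begin{equation*}
   \alpha_n-F(n)\ge G(n)-C_2d_n-G(n)-O(d_n)=-O(d_n)\ge -C_2'(d_n+d_{n+1}),
\end{equation*}
which is \textbf{(iii)} of Proposition~\ref{PropMalKost511}. Self-adjointness of $\mathrm{H}_{X,\alpha}$ then follows from that proposition in either case.

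There is no real obstacle here: the proposition is essentially a cosmetic rewriting of Kostenko--Malamud's tests. The only point requiring care is the bookkeeping of the two distinct uses of Landau notation---the $O(d_n)$ inherent in the representation \eqref{EqFLessG} and the $O(d_n)$ implicit in the one-sided bounds $C_1d_n$, $-C_2d_n$ of \textbf{(II)}, \textbf{(III)}---which, combined, produce a new $O(d_n)$ term that must be dominated by $d_n+d_{n+1}$. Since $d_n\le d_n+d_{n+1}$ trivially, this domination is automatic and the constants $C_1,C_2$ simply get replaced by (possibly larger) $C_1',C_2'$, as flagged in the footnote.
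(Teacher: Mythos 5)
Your proof is correct and follows essentially the same route as the paper: both arguments substitute the representation $F(n)=G(n)+O(d_n)$ into the hypotheses \textbf{(II)}, \textbf{(III)}, rewrite the resulting left-hand sides via the identity $\frac{1}{d_n}\bigl(1+\frac{r_n}{r_{n-1}}\bigr)+\frac{1}{d_{n+1}}\bigl(1+\frac{r_n}{r_{n+1}}\bigr)=\frac{2}{d_n}+\frac{2}{d_{n+1}}+F(n)$ (and its sign-flipped analogue), and absorb the combined $O(d_n)$ terms into an enlarged constant times $d_n\le d_n+d_{n+1}$ so that conditions \textbf{(ii)}, \textbf{(iii)} of Proposition~\ref{PropMalKost511} apply. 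Your explicit bookkeeping of the two distinct $O(d_n)$ sequences matches the caveat in the paper's footnote; no gap.
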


\begin{proof}[Proof]
By condition \textbf{(II)}, we have:
$$
\alpha_n+\dfrac{2}{d_n}+\dfrac{2}{d_{n+1}}+
                       G(n)\leq C_1d_n\quad(n\in\mathbb{N}).
$$
Then
$$
\alpha_n+\dfrac{2}{d_n}+\dfrac{2}{d_{n+1}}+ F(n)-O(d_n)\leq
                       C_1d_n\quad(n\in\mathbb{N}).
$$
Since the sequence $\{d_n\}_1^\infty$ is positive, we conclude that
$$
\alpha_n+\frac{1}{d_n}\Bigl(1+\frac{r_n}{r_{n-1}}\Bigr)+
            \frac{1}{d_{n+1}}\Bigl(1+\frac{r_n}{r_{n+1}}\Bigr)\leq
            Cd_n\leq C(d_n+d_{n+1})\quad(n\in \mathbb{N})
$$
for some $C>0.$ Consequently, it follows from the sufficient
condition \textbf{(ii)} of Proposition~\ref{PropMalKost511} that
the Hamiltonian $\mathrm{H}_{X,\alpha}$ is self-adjoint.

Arguing similarly we can prove that condition \textbf{(III)} implies estimate
\textbf{(iii)} of Proposition~\ref{PropMalKost511}.
\end{proof}

It is important to note that, for sequences $\{d_n\}_1^{\infty}\in
\ell_2\setminus \ell_1$ satisfying \eqref{EqdnRestrns_1}, both
estimates \textbf{(ii)}, \textbf{(II)} and
\textbf{(iii)},\textbf{(III)} are fulfilled or not fulfilled
simultaneously. Hence Proposition~\ref{PropMalKost511} and
Proposition~\ref{PropFGgen} are equivalent for such sequences.
Note also that tests \textbf{(II)} and \textbf{(III)} are of
common use only in the case when the function $G$ in decomposition
\eqref{EqFLessG} has simpler form than the function $F$. In this
case, there are of great interest sequences $\{d_n\}_1^\infty$
such that $G$ can be chosen as
\textit{zero} function\label{PagefootsupFndn}%
\footnote{See Proposition~\ref{PropGendDiffable} and
Example~\ref{Exngammalneta} in what follows. Relation
\eqref{EqFLessG} implies directly that $G$ is zero function if and
only if $\sup_{n\in\mathbb{N}}\frac{F(n)}{d_n}<\infty$.
}%
\newcounter{footsupFndn}%
\setcounter{footsupFndn}{\value{footnote}}%
: due to test \textbf{(III)}, all the Hamiltonians
$\mathrm{H}_{X,\alpha}$ with nonnegative sequences $\alpha$ are
self-adjoint. In the propositions below, we present a number of
requirements to properties of a sequence $\{d_n\}_1^\infty$ in
order to provide the asymptotics $F(n)=O(d_n)$.

\begin{proposition}\label{PropAsympt}
Let a sequence $\{d_n\}_1^{\infty}\in
\ell_2\setminus \ell_1$ satisfy the following asymptotic estimate:
\begin{equation}\label{asympt}
\frac{d_{n+1}}{d_n}=1+Cd_n+O(d_n^2).
\end{equation}
Then $F(n)\ =\ O(d_n).$
\end{proposition}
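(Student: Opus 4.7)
The plan is to expand everything inside $F(n)$ as a power series in $d_n$, retaining enough terms so that the leading contributions from the two summands can be seen to cancel, leaving an $O(d_n)$ remainder.

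First I would translate the ratio hypothesis \eqref{asympt} into asymptotic formulas for $d_{n-1}$, $d_{n+1}$, and $d_{n+2}$ expressed in powers of $d_n$. Since $d_n\to 0$ and $d_{n+1}=d_n(1+Cd_n+O(d_n^2))$, one gets $d_{n+1}=d_n+Cd_n^2+O(d_n^3)$. Inverting the same relation at index $n-1$ gives $d_{n-1}=d_n-Cd_n^2+O(d_n^3)$. Applying the hypothesis twice (and noting that $d_{n+1}=d_n+O(d_n^2)$, so $Cd_{n+1}=Cd_n+O(d_n^2)$) yields $d_{n+2}=d_n+2Cd_n^2+O(d_n^3)$.

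Next I would plug these expansions into $r_n^2=d_n+d_{n+1}$, $r_{n-1}^2=d_{n-1}+d_n$, and $r_{n+1}^2=d_{n+1}+d_{n+2}$ to obtain
\begin{equation*}
   r_{n-1}^2=2d_n-Cd_n^2+O(d_n^3),\qquad
   r_n^2=2d_n+Cd_n^2+O(d_n^3),\qquad
   r_{n+1}^2=2d_n+3Cd_n^2+O(d_n^3).
\end{equation*}
Forming the ratios, the factor $2d_n$ cancels, so a routine geometric-series expansion of $\frac{1+x}{1+y}$ together with $\sqrt{1+z}=1+\tfrac{z}{2}+O(z^2)$ gives
\begin{equation*}
   \frac{r_n}{r_{n-1}}=1+\frac{Cd_n}{2}+O(d_n^2),\qquad
   \frac{r_n}{r_{n+1}}=1-\frac{Cd_n}{2}+O(d_n^2).
\end{equation*}

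Finally I would assemble $F(n)$. Dividing the first expansion by $d_n$ contributes $\tfrac{C}{2}+O(d_n)$. For the second summand I use $\frac{1}{d_{n+1}}=\frac{1}{d_n}(1+O(d_n))$, which converts the $-\tfrac{Cd_n}{2}+O(d_n^2)$ term into $-\tfrac{C}{2}+O(d_n)$. The two constant halves cancel, yielding $F(n)=O(d_n)$ as claimed. The convention $r_0:=1$ affects only $n=1$, so it enters only a finite initial segment and does not influence the asymptotic statement.

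The main obstacle is purely bookkeeping: one must carry the expansions of $d_{n\pm 1}$ and $d_{n+2}$ to the $d_n^2$ order (not merely $d_n$) in order for the $\frac{C}{2}$ and $-\frac{C}{2}$ terms to appear explicitly and cancel. A coarser expansion would only yield $F(n)=O(1)$, which is too weak. Being careful that the error term in \eqref{asympt} at index $n+1$ is still $O(d_n^2)$ (because $d_{n+1}\asymp d_n$) is the one subtlety worth emphasizing.
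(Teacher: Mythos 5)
Your proof is correct and follows essentially the same route as the paper: expand $r_n/r_{n-1}$ and $r_n/r_{n+1}$ to second order in $d_n$ using \eqref{asympt} and observe that the leading terms cancel in $F(n)$. (Incidentally, your constant $\tfrac{C}{2}$ in the expansions of $r_n/r_{n\pm1}$ is the accurate one; the paper writes $C$ there, which is harmless since the terms cancel either way.)
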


\begin{proof}[Proof]
Using~\eqref{asympt} and carrying out direct calculations
we can show that the following relations hold:
\begin{enumerate}
  \item[\textbf{(a0)}]\;$\frac{d_n}{d_{n+1}}=1-Cd_n+O(d_n^2 )$.
  \item[\textbf{(a1)}]\;$\frac{r_n}{r_{n-1}}=\sqrt{\frac{d_n+d_{n+1}}{d_{n-1}+d_{n}}}=
                        \sqrt{\frac{1+d_{n+1}/d_n}{1+d_{n-1}/d_n}}=\sqrt{\frac{2+Cd_n+O(d_n^2)}{2-Cd_n+O(d_n)}}
                        =1+Cd_n+O(d_n^2);$
  \item[\textbf{(a2)}]\; Similarly, $\frac{r_n}{r_{n+1}}=1-Cd_n+O(d_n^2).$
  \end{enumerate}

In this case, we obtain the following relations:
\begin{equation*}
  \begin{split}
     & F(n)=\tfrac{1}{d_n}\big(\tfrac{r_n}{r_{n-1}}-1\big) +
                          \tfrac{1}{d_{n+1}}
                          \big(\tfrac{r_n}{r_{n+1}}-1\big)
                          =\tfrac{1}{d_n}\big(\tfrac{r_n}{r_{n-1}}-1
                           +\tfrac{d_n}{d_{n+1}}\big(\tfrac{r_{n}}{r_{n+1}}-1 \big) \big)
                           \overset{\textbf{a0--a2}}{=}\\
&    =\tfrac{1}{d_n}
           \big(Cd_n+O(d_n^2)+\Big(1-Cd_n+O(d_n^2 )\big)\big(-Cd_n+O(d_n^2)\big)
           \big)= \\
 &    =\tfrac{1}{d_n}
           \big(Cd_n+O(d_n^2)-Cd_n+O(d_n^2)\big)
   =\tfrac{1}{d_n}O(d_n^2)=O(d_n).
     \end{split}
\end{equation*}
\end{proof}

\begin{example}\label{Exngamma}
{\em Suppose that the Hamiltonian $\mathrm{H_{X,\alpha}}$ is
generated by the differential expression \eqref{EqlXalpha} as
explained in Introduction, with  $X=\{x_n\}_{0}^{\infty}$ defined
by the relations:
\begin{align}
    & x_0=0,\quad x_n=x_{n-1}+d_n\quad(n\in \mathbb{N}),\notag \\
    &  d_n=\dfrac{1}{n^{\gamma}},\ \gamma\in(1/2,\,1).\label{EqExngamma1}
    \end{align}

Then $\mathrm{H}_{X,\alpha}$ is self-adjoint provided that there holds one of
the following conditions:
\begin{enumerate}
   \item[\textbf{(s1)}]\;$\sum_{n=1}^{\infty}|\alpha_n|n^{-3\gamma}=\infty\,$.
   \item[\textbf{(s2)}]\; There exists a constant $C_1>0$ such that
   $$
   \alpha_n\leq-2\big(n^{\gamma}+(n+1)^{\gamma}\big)+
                           \frac{C_1}{n^{\gamma}}\quad(n\in \mathbb{N}).
   $$\label{PageExngamma}

   \item[\textbf{(s3)}]\;There exists a constant $C_2>0$ such that
    \quad $\alpha_n\geq -C_2\frac{1}{n^{\gamma}}\quad(n\in \mathbb{N})$.
\end{enumerate}
} Indeed, in this case we have $\{d_n\}_1^{\infty}\in
\ell_2\setminus \ell_1$ and, by direct calculations, we conclude
that
$\frac{d_{n+1}}{d_n}=1-\frac{1}{n^{\gamma}}+O(\frac{1}{n^{2\gamma}})$.
Then, in view of Proposition~\ref{Propdn3}, condition
\textbf{(s1)} provides the self-adjointness of the operator
$\mathrm{H}_{X,\alpha}$. To prove that \textbf{(s2)} and
\textbf{(s3)} can be applied, it suffices to note that the
sequence $\{d_n\}_1^{\infty}$ satisfy the assumptions of
Proposition~\ref{PropAsympt} and, therefore, $G(n)=0$. Further, we
can use conditions \textbf{(II)}, \textbf{(III)} of
Proposition~\ref{PropFGgen} directly.

As was mentioned before, the particular case $\gamma=1$ was
considered in \cite{karpiiKost} (see \textbf{1} on
p.\,\pageref{Pagesadj1n}).
\end{example}

In some cases (see Example~\ref{Exngammalneta})
condition~\eqref{asympt} is too strict. Nevertheless,
if~\eqref{asympt} does not hold, it is possible to carry out more refined
analysis of properties of the sequence $\{d_n\}_1^{\infty}$
leading to the asymptotics $F(n)\ =\ O(d_n).$

\begin{proposition} \label{PropGendDiffable}
Let $p\in\mathbb{N}$, and let a sequence $\{d_n\}_1^{\infty}\in
\ell_2\setminus \ell_1$ be generated by the function
$d\colon\;d_n:=d(n)\;\:(n\in\overline{p,\infty})$ defined on the
interval $(p,\infty)$ and twice continuously differentiable on it.
Let also the function $d$ satisfy the following conditions:
\begin{enumerate}
  \item[\textbf{(d0)}]\;$\frac{d_{n+1}}{d_n}=1+O(d_n)$.
  \item[\textbf{(d1)}]\;$d'(n)\neq0\;\:(n\in\overline{m+1,\infty})$\quad{and}\quad%
                        $\sup_{\substack{n\in\overline{m+1,\infty} \\
                                         \zeta,\eta\in[-1,2]
                                        }
                              }
                         \frac{|d'(n+\zeta)|}{|d'(n+\eta)|}<\infty
                        $ for some number $m\in\overline{p,\infty}$.
  \item[\textbf{(d2)}]\;$d''(n)\neq0\;\:(n\in\overline{m+1,\infty})$\quad{and}\quad%
                        $\sup_{\substack{n\in\overline{m+1,\infty} \\
                                         \zeta,\eta\in[-1,2]
                                        }
                              }
                         \frac{|d''(n+\zeta)|}{|d''(n+\eta)|}<\infty
                        $ for some number $m\in\overline{p,\infty}$.
  \item[\textbf{(d3)}]\;$\frac{d''(n)}{d'(n)}=O(d_n).$
  \end{enumerate}
Then $F(n)\ =\ O(d_n).$
\end{proposition}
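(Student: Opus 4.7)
\emph{Plan.} The strategy is to put the two summands of $F(n)$ over a common denominator and exploit a cancellation between them. Using the identity $a-1 = (a^2-1)/(a+1)$ with $a = r_n/r_{n-1}$ and $a = r_n/r_{n+1}$, together with $r_k^2 = d_k + d_{k+1}$, I would rewrite
\begin{equation*}
F(n) \;=\; \frac{d_{n+1}-d_{n-1}}{d_n\,r_{n-1}(r_n+r_{n-1})} + \frac{d_n-d_{n+2}}{d_{n+1}\,r_{n+1}(r_n+r_{n+1})}.
\end{equation*}
Iterating condition \textbf{(d0)} once gives $d_{n+k}/d_n = 1+O(d_n)$ for $k \in \{-1,1,2\}$, so $r_j = \sqrt{2d_n}\,(1+O(d_n))$ for $j \in \{n-1,n,n+1\}$, and each full denominator equals $4 d_n^2 (1+O(d_n))$.

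The main work is then bounding the numerators. Condition \textbf{(d0)} immediately yields $d_{n+1}-d_{n-1} = O(d_n^2)$ and $d_n-d_{n+2} = O(d_n^2)$, which alone would give only $F(n) = O(1)$. The decisive observation is the identity
\begin{equation*}
(d_{n+1}-d_{n-1}) + (d_n - d_{n+2}) \;=\; -(\Delta_{n+1} - \Delta_{n-1}), \qquad \Delta_k := d_{k+1}-d_k,
\end{equation*}
whose right-hand side is in fact $O(d_n^3)$. To see this, the mean value theorem gives $\Delta_n = d'(\xi_n)$ with $\xi_n \in (n,n+1)$, and condition \textbf{(d1)} upgrades $\Delta_n = O(d_n^2)$ to the pointwise bound $d'(n) = O(d_n^2)$. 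Substituting into \textbf{(d3)} yields $d''(n) = O(d'(n)\,d_n) = O(d_n^3)$, and \textbf{(d2)} extends this to $d''(\xi) = O(d_n^3)$ for all $\xi$ in a fixed neighborhood of $n$. Writing
\begin{equation*}
\Delta_{n+1} - \Delta_{n-1} = \int_0^1\!\int_{n-1+s}^{n+1+s} d''(u)\,du\,ds
\end{equation*}
then yields the claim.

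Assembling the pieces, and using $|d_{n+1}-d_{n-1}|,\,|d_n-d_{n+2}| = O(d_n^2)$ to control the error from replacing each denominator by $4d_n^2$, one obtains
\begin{equation*}
F(n) \;=\; \frac{(d_{n+1}-d_{n-1})+(d_n-d_{n+2})}{4 d_n^2} + O(d_n) \;=\; \frac{O(d_n^3)}{d_n^2} + O(d_n) \;=\; O(d_n),
\end{equation*}
as required. \emph{Main obstacle.} Each summand of $F(n)$ taken alone is only $O(1)$; the entire gain stems from a cancellation of leading terms, and verifying that the residue is genuinely $O(d_n^3)$ rather than $O(d_n^2)$ requires using \textbf{(d1)}--\textbf{(d3)} together: \textbf{(d1)} to pass from finite differences of $d$ to values of $d'$ at integer points, \textbf{(d3)} to convert the resulting bound on $d'$ into one on $d''$, and \textbf{(d2)} to transport the pointwise bound on $d''$ to all intermediate points appearing in the integral representation of $\Delta_{n+1}-\Delta_{n-1}$.
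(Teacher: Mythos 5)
Your proposal is correct and follows essentially the same route as the paper: both reduce $F(n)$ to the numerators $d_{n+1}-d_{n-1}$ and $d_n-d_{n+2}$ over denominators comparable to $4d_n^2$, and both obtain the decisive extra factor of $d_n$ from the second difference $(d_{n+1}-d_{n-1})-(d_{n+2}-d_n)$, controlled by $d''$ via the mean value theorem with \textbf{(d1)}--\textbf{(d3)} playing exactly the roles you describe. The only (immaterial) difference is that you convert everything to absolute bounds $d'(n)=O(d_n^2)$, $d''(n)=O(d_n^3)$, whereas the paper works with the ratio $\frac{(d_{n+1}-d_{n-1})-(d_{n+2}-d_n)}{d_{n+1}-d_{n-1}}=O(d_n)$ and never needs the pointwise estimate on $d'$.
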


\begin{proof}[Proof]
Since $d_n\rightarrow0,\ n\rightarrow\infty$, for arbitrary positive numbers
$u,\ v$ the following relations hold:
\begin{equation}\label{EqPropVelocConverg2.1}
    \mathbf{a)}\;\:
    \big(u+O(d_n)\big)^{-1}=
    u^{-1}+O(d_n);\qquad
    \mathbf{b)}\;\:
    \sqrt{\frac{u+O(d_n)}{v+O(d_n)}}=
    \sqrt{\frac{u}{v}}+O(d_n).
\end{equation}

Then condition $\textbf{(d0)}$ implies the preliminary asymptotic estimates:
\begin{enumerate}
  \item[\textbf{(b0)}]\;$\frac{d_n}{d_{n+1}}=1+O(d_n);$
  \item[\textbf{(b1)}]\;$\frac{r_n}{r_{n-1}}=1+O(d_n);$
  \item[\textbf{(b2)}]\;$\frac{r_n}{r_{n+1}}=1+O(d_n).$
  \end{enumerate}

In this case, the following relations hold:
\begin{equation}\label{EqFn}
  \begin{split}
     & F(n)=\tfrac{1}{d_n}\big(\tfrac{r_n}{r_{n-1}}-1\big) +
                          \tfrac{1}{d_{n+1}}
                          \big(\tfrac{r_n}{r_{n+1}}-1\big)
           =\tfrac{1}{d_n}\big( \tfrac{r_n}{r_{n-1}}-1
                          \big)
                          \big(1+\tfrac{d_n}{d_{n+1}}\cdot\tfrac{r_{n-1}}{r_{n+1}}\cdot
                              \tfrac{r_n-r_{n+1}}{r_n-r_{n-1}}
                          \big)
                          \overset{\textbf{(b1)}}{=}\\
     &    =\tfrac{1}{d_n}O(d_n)
           \Big( 1+\tfrac{d_n}{d_{n+1}}\cdot
                   \tfrac{r_{n-1}}{r_{n+1}}\cdot
                   \tfrac{r_n+r_{n-1}}{r_n+r_{n+1}}\cdot
                   \tfrac{r_n^2-r_{n+1}^2}{r_n^2-r_{n-1}^2}
           \Big)= \\
     &    =\tfrac{1}{d_n}O(d_n)
           \Big( 1+\tfrac{d_n}{d_{n+1}}\cdot
                   \big(\tfrac{r_{n-1}}{r_{n+1}}
                   \big)^2\cdot
                   \tfrac{r_n/r_{n-1}+1}{r_n/r_{n+1}+1}\cdot
                   \tfrac{r_n^2-r_{n+1}^2}{r_n^2-r_{n-1}^2}
           \Big)
           \overset{(\textbf{b0--b2},\ref{EqPropVelocConverg2.1},\ref{EqDefrn})}{=}\\
     &    =\tfrac{1}{d_n}O(d_n)
           \big(1+\big(1+O(d_n)\big)
                  \big(1+O(d_n)\big)^2\cdot
                  \tfrac{2+O(d_n)}{2+O(d_n)}\cdot
                  \tfrac{d_n-d_{n+2}}{d_{n+1}-d_{n-1}}
           \big)
           \overset{\eqref{EqPropVelocConverg2.1}}{=}\\
     &    =\tfrac{1}{d_n}O(d_n)
           \big(1+\big(1+O(d_n)\big)
                  \tfrac{d_n-d_{n+2}}{d_{n+1}-d_{n-1}}
           \big)=\\
     &    =\tfrac{1}{d_n}O(d_n)
           \big(\tfrac{(d_{n+1}-d_{n-1})-(d_{n+2}-d_n)}{d_{n+1}-d_{n-1}}+
                O(d_n)\tfrac{d_n-d_{n+2}}{d_{n+1}-d_{n-1}}
           \big).
  \end{split}
\end{equation}

Below we will need properties \textbf{(d1--d3)} of the sequence
$\{d_n\}_1^\infty$. Namely, for a sufficiently large $n$  and for some $C>0$
we obtain:
\begin{equation*}
   \begin{split}
      & \frac{(d_{n+1}-d_{n-1})-(d_{n+2}-d_n)}{d_{n+1}-d_{n-1}}=
        2\frac{d'(n+\zeta_n)-d'(n+\theta_n)}{d'(n+\zeta_n)}=\\
      & =2(\zeta_n-\theta_n)\frac{d''(n+\xi_n)}{d'(n+\zeta_n)}=
         2(\zeta_n-\theta_n)\frac{d''(n+\xi_n)}{d''(n)}\cdot
         \frac{d'(n)}{d'(n+\zeta_n)}\cdot\frac{d''(n)}{d'(n)}
         \overset{(\textbf{d1--d3})}=O(d_n);\\
      & \Big|\frac{d_n-d_{n+2}}{d_{n+1}-d_{n-1}}\Big|=
        \Big|\frac{d'(n+\theta_n)}{d'(n+\zeta_n)}\Big|\overset{\textbf{(d1)}}<C,\\
  \end{split}
\end{equation*}
where $\zeta_n\in [-1,1],\ \theta_n\in[0,2],\ \xi_n\in[-1,2]$. Then \eqref{EqFn}
implies the following estimate of the function $F(n)$:
$$
F(n)=\frac{1}{d_n}O(d_n)\cdot O(d_n)=O(d_n).
$$
\end{proof}

Assumptions of Proposition~\ref{PropGendDiffable} can already be
used for wider classes of sequences. Let us apply the above
results to a two-parametric family of sequences involving
Example~\ref{Exngamma} as well.

\begin{example} \label{Exngammalneta}
{\em Suppose that the Hamiltonian $\mathrm{H_{X,\alpha}}$ is
generated by the differential expression \eqref{EqlXalpha} as
explained in Introduction, with  $X=\{x_n\}_{0}^{\infty}$ defined
by the relations:
\begin{align}
    & x_0=0,\quad x_n=x_{n-1}+d_n\quad(n\in \mathbb{N}),\notag \\
    &  d_1>0,\quad  d_n=\dfrac{1}{n^{\gamma}\ln^\eta n}
       \quad(n\in\overline{2,\infty}),\label{EqExngammalneta1}\\
    &  \langle\gamma,\eta\rangle\in
      (1/2,\,1)\!\times\!(-\infty,+\infty)\:\cup\:\{1\}\!\times\!(-\infty,0].\notag
\end{align}

Then $\mathrm{H}_{X,\alpha}$ is self-adjoint provided that there holds one of
the following conditions:
\begin{enumerate}
   \item[\textbf{(sa1)}]\;$\sum_{n=1}^{\infty}|\alpha_n|n^{-3\gamma}\ln^{-3\eta}n=\infty\,$.
   \item[\textbf{(sa2)}]\;There exists a constant $C_1>0$ such that
   $$
                           \alpha_n\leq
                           -2\big(n^{\gamma}\ln^\eta n+(n+1)^{\gamma}\ln^\eta(n+1)\big)+
                           \frac{C_1}{n^{\gamma}\ln^\eta n}
                           \quad(n\in \mathbb{N}).
                          $$\label{PageExngammalneta}
   \item[\textbf{(sa3)}]\;There exists a constant $C_2>0$ such that
                          $\alpha_n\geq -\frac{C_2}{n^{\gamma}\ln^\eta n}\quad(n\in \mathbb{N})$.
\end{enumerate}}

Indeed, in this case we have $\{d_n\}_1^{\infty}\in
\ell_2\setminus \ell_1$ and, by direct calculations, we conclude
that $\frac{d_{n+1}}{d_n}=1+O(d_n)$. It is important to note that
the sequence $\frac{d_{n+1}}{d_n}$ doesn't satisfy
estimate~\eqref{asympt} if $\eta\neq 0$. To prove that
\textbf{(sa1)} can be applied, we use Proposition~\ref{Propdn3}.
To prove that \textbf{(sa2)} and \textbf{(sa3)} are applicable,
consider the function $d(x)=\dfrac{1}{x^{\gamma}\ln^\eta x}$
generating the sequence $\{d_n\}_1^\infty$. Derivatives of this
function are of the form:
\begin{align*}
&d'(x)=-(\gamma \ln x+\eta)x^{-\gamma-1}\ln^{-\eta-1}x;\\
&d''(x)=\big(\gamma(\gamma+1)\ln^2x+(2\gamma+1)\eta\ln
x+\eta(\eta+1)\big)x^{-\gamma-2}\ln^{-\eta-2}x.
\end{align*}
This immediately implies conditions \textbf{(d1)}, \textbf{(d2)}
of Proposition~\ref{PropGendDiffable}.

The relation
\begin{align*}
\frac{d''(n)}{d'(n)d_n}=-\frac{\big(\gamma(\gamma+1)\ln^2n+(2\gamma+1)\eta\ln
n+\eta(\eta+1)\big)}{(\gamma \ln^2 n+\eta\ln n)}\cdot
n^{\gamma-1}\ln^{\eta}n
\end{align*}
shows that condition \textbf{(d3)} is fulfilled either for
$\frac{1}{2}<\gamma<1,\ \eta\in(-\infty,+\infty),$ or for
$\gamma=1,\,\eta\leq 0$.

\end{example}

Note\label{Pagel2Gap} that in the two-parametric family of
sequences of form \eqref{EqExngammalneta1} lying in
$\ell_2\setminus \ell_1$ there is a "gap"\ consisting of
sequences of the form $\big\{\frac{1}{n\ln^\eta
n}\big\},\;\eta\in(0,1]$. Condition \textbf{(d0)} is violated for
sequences from this "gap"\ (despite the fact that
$\lim_{n\rightarrow\infty}\frac{d_{n+1}}{d_n}=1$). To investigate
these cases, we must know additional properties of the sequence
$\{d_n\}_1^{\infty}$. Below, in
Proposition~\ref{PropGendDiffable2}, we present analytic
conditions on the function $d$ generating the sequence
$\{d_n\}_1^{\infty}$ such that we may choose the function $G$ from
\eqref{EqFLessG} in estimate \textbf{(II)} of
Proposition~\ref{PropFGgen}.

\begin{proposition}\label{PropGendDiffable2}
Let $p\in\mathbb{N}$, and let a sequence $\{d_n\}_1^{\infty}$ be
generated by the function
$d\colon\;d_n:=d(n)\;\:(n\in\overline{p,\infty})$ that is defined
on the interval $(p,\infty)$ and is continuously differentiable on
it. Assume also fulfillment of conditions
(\textbf{d0}), (\textbf{d1}) of Proposition~\ref{PropGendDiffable}
as well as the following condition:
\begin{enumerate}
   \item[\textbf{(d4)}]\; There exists a $k\in\mathbb{N}$ such that
                         $\big|\frac{d'(n)}{d_n}\big|^{k}=O(d_n^2)$.
\end{enumerate}
Then there exist numbers $\{C_{i}\}_{i\in\overline{0,k-1}}$ such that, for
$k$ from \textbf{(d4)}, we have:
\begin{equation}\label{EqPropGendDiffable1}
    F(n)=
    \frac{1}{d_n}\sum_{i=1}^{k-1}C_{i}u(n)^i+\frac{1}{d_{n+1}}\sum_{i=1}^{k-1}C_{i}v(n)^i
    +O(d_n)
    \quad(n\in\overline{p,\infty}),
\end{equation}
where
$ u(n)=\frac{d_{n+1}-d_{n-1}}{d_{n}+d_{n-1}},\
  v(n)=\frac{d_{n}-d_{n+2}}{d_{n+1}+d_{n+2}}\,.
$
\end{proposition}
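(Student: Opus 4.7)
The plan is to expose $F(n)$ as a nonlinear function of the auxiliary quantities $u(n)$ and $v(n)$, then apply a truncated Taylor expansion whose length is dictated by (\textbf{d4}). First I observe the purely algebraic identity
\begin{equation*}
\Bigl(\tfrac{r_n}{r_{n-1}}\Bigr)^{2}=\frac{d_n+d_{n+1}}{d_{n-1}+d_n}=1+\frac{d_{n+1}-d_{n-1}}{d_{n-1}+d_n}=1+u(n),
\end{equation*}
and analogously $(r_n/r_{n+1})^{2}=1+v(n)$. Consequently
\begin{equation*}
F(n)=\frac{\sqrt{1+u(n)}-1}{d_n}+\frac{\sqrt{1+v(n)}-1}{d_{n+1}}.
\end{equation*}
This reduces the problem to controlling the function $x\mapsto\sqrt{1+x}-1$ at $x=u(n)$ and $x=v(n)$.

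Next I would quantify the size of $u(n)$ and $v(n)$. By the mean value theorem, $d_{n+1}-d_{n-1}=2d'(n+\zeta_n)$ and $d_n-d_{n+2}=-2d'(n+\theta_n)$ for suitable $\zeta_n\in[-1,1]$, $\theta_n\in[0,2]$. Combining this with (\textbf{d0}) (which yields $d_{n-1}+d_n=2d_n(1+O(d_n))$ and similarly for $d_{n+1}+d_{n+2}$) and with the uniform ratio bound in (\textbf{d1}), I obtain
\begin{equation*}
u(n)=\frac{d'(n)}{d_n}\cdot O(1),\qquad v(n)=\frac{d'(n)}{d_n}\cdot O(1),
\end{equation*}
where the $O(1)$ constants come from (\textbf{d1}). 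In particular, by (\textbf{d4}), $|u(n)|^{k}+|v(n)|^{k}=O(d_n^{2})$, and a fortiori $u(n),v(n)\to 0$, so we may freely Taylor-expand $\sqrt{1+x}$ about $0$.

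Now I would apply the standard Taylor expansion
\begin{equation*}
\sqrt{1+x}-1=\sum_{i=1}^{k-1}C_{i}\,x^{i}+O(x^{k})\qquad(|x|\to 0),
\end{equation*}
with $C_i=\binom{1/2}{i}$, substitute $x=u(n)$ and $x=v(n)$, and collect:
\begin{equation*}
F(n)=\frac{1}{d_n}\sum_{i=1}^{k-1}C_{i}u(n)^{i}+\frac{1}{d_{n+1}}\sum_{i=1}^{k-1}C_{i}v(n)^{i}+\frac{O(u(n)^{k})}{d_n}+\frac{O(v(n)^{k})}{d_{n+1}}.
\end{equation*}
Finally, using (\textbf{d0}) to replace $1/d_{n+1}$ by $(1+O(d_n))/d_n$ in the remainder only, the two error terms reduce to $\tfrac{1}{d_n}O(d_n^{2})=O(d_n)$ by (\textbf{d4}), giving \eqref{EqPropGendDiffable1}. (The constant $C_0$ in the statement is merely a placeholder; it does not appear in the sums.)

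The only genuine technical point is the passage $|u(n)|,|v(n)|=O(|d'(n)/d_n|)$, where one must be careful that the bound is \emph{uniform} in $n$, which is exactly what the uniform supremum in (\textbf{d1}) is designed to secure; the rest is bookkeeping of $O(\cdot)$-terms along the lines already carried out in the proof of Proposition~\ref{PropGendDiffable}. I do not expect any substantive obstacle beyond this bookkeeping, since no property deeper than (\textbf{d0}), (\textbf{d1}), (\textbf{d4}) and the Taylor expansion of $\sqrt{1+x}$ is required.
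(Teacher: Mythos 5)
Your proposal is correct and follows essentially the same route as the paper's own proof: the identity $F(n)=\tfrac{\sqrt{1+u(n)}-1}{d_n}+\tfrac{\sqrt{1+v(n)}-1}{d_{n+1}}$, the mean-value-theorem estimate $u(n),v(n)=O\bigl(|d'(n)|/d_n\bigr)$ via (\textbf{d0})--(\textbf{d1}), and truncation of the expansion of $\sqrt{1+x}$ at order $k$ using (\textbf{d4}). The only cosmetic difference is that the paper expands into the full binomial series and bounds the tail $\sum_{i\geq k}C_iu(n)^i=O\bigl(u(n)^k\bigr)$, whereas you use a finite Taylor expansion with remainder; these are equivalent.
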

\begin{proof}[Proof]
Taking into account~\eqref{EqFDef} we obtain:
\begin{equation*}
  F(n)=\frac{1}{d_{n}}\Bigl(\frac{r_n}{r_{n-1}}-1\Bigr)
  +\frac{1}{d_{n+1}}\Bigl(\frac{r_n}{r_{n+1}}-1\Bigr),
  \end{equation*}
where
\begin{align*}
   & \frac{r_n}{r_{n-1}}=\sqrt{\frac{d_{n}+d_{n+1}}{d_{n}+d_{n-1}}}=\sqrt{1+u(n)},\\
   & \frac{r_n}{r_{n+1}}=\sqrt{\frac{d_{n}+d_{n+1}}{d_{n+1}+d_{n+2}}}=\sqrt{1+v(n)}.
\end{align*}
Using Taylor's series expansion of the function
$\sqrt{1+x}=1+\sum_{i=1}^{\infty}C_ix^i$ we have:
\begin{equation}\label{EqFnSer}
  F(n)=\frac{1}{d_{n}}\sum_{i=1}^{\infty}C_iu(n)^i
  +\frac{1}{d_{n+1}}\sum_{i=1}^{\infty}C_iv(n)^i.
  \end{equation}
Let us estimate the behavior of terms of these series at infinity. In view of \textbf{(d0)},
\textbf{(d1)}, for some $\zeta_n\in[-1,1],\ \theta_n\in[0,2]$ we have:
\begin{align*}
  & u(n)=2\frac{d'(n+\zeta_n)}{d_{n}+d_{n-1}}=2\frac{d'(n+\zeta_n)}{d'(n)}\cdot
          \frac{d_n}{d_{n}+d_{n-1}}\cdot \frac{d'(n)}{d_n}=
          O\Big(\frac{|d'(n)|}{d_n}\Big);\\
  & v(n)=-2\frac{d'(n+\theta_n)}{d_{n+1}+d_{n+2}}=-2\frac{d'(n+\theta_n)}{d'(n)}\cdot
           \frac{d_n}{d_{n+1}+d_{n+2}}\cdot \frac{d'(n)}{d_n}=
           O\Big(\frac{|d'(n)|}{d_n}\Big).
  \end{align*}
Then, for $k\in\mathbb{N}$ from \textbf{(d4)}, we obtain:
\begin{equation*}
   \sum_{i=k}^{\infty}C_iu(n)^i=
   O\big(u(n)^k\big)=O\Big(\frac{|d'(n)|^k}{d_n^k}\Big)=O(d_n^2).
\end{equation*}
Similarly, we conclude that $\sum_{i=k}^{\infty}C_iv(n)^i=
O(d_n^2)$. Hence estimate~\eqref{EqPropGendDiffable1} holds.
\end{proof}

In some cases, the right-hand side of \eqref{EqPropGendDiffable1}
can be used to select from $F$ "the best"\ estimator $G$
satisfying the relation $F(n)=G(n)+O(d_n)$. Such a function $G$
should include all the "parts"\ of sequences
$\tfrac{d'(n)^i}{d_n^{i+1}}\;\:(i\in\overline{1,k-1})$ that grow
slower than $d_n$ at infinity. Note that, to obtain more effective
estimates in assumptions of Proposition~\ref{PropGendDiffable2},
the following argument is useful. Since
\begin{align*}
   & \frac{1}{d_n}u(n)-\frac{1}{d_{n+1}}v(n)
     = \frac{1}{d_n}\cdot
       \frac{d'(n+\zeta)}{d_{n}+d_{n-1}}-
       \frac{1}{d_{n+1}}\cdot\frac{d'(n+\theta)}{d_{n+1}+d_{n+2}}=\\
   & =\frac{d_n}{d_n+d_{n+1}}\cdot \frac{d'(n+\zeta)}{d_{n}^2}-
      \frac{d_{n+1}}{d_{n+1}+d_{n+2}}\cdot
      \frac{d^2_n}{d^2_{n+1}}\cdot
      \frac{d'(n+\theta)}{d_{n}^2},
  \end{align*}
and since both expressions $\frac{d_n}{d_n+d_{n+1}}$ and
$\frac{d_{n+1}}{d_{n+1}+d_{n+2}}\cdot
\big(\frac{d_n}{d_{n+1}}\big)^2$ are close to 1 for
sufficiently large $n$, we conclude that the behavior of the summand
$C_1\big(\frac{1}{d_n}u(n)-\frac{1}{d_{n+1}}v(n)\big)$ at infinity is determined by
the expression $\tfrac{d''(n)}{d_n^2}$. Trying to avoid general definitions here
we will carry out the reasoning in the following example.

\begin{example} \label{Exnlnneta}
Consider the sequences $\big\{\frac{1}{n\ln^\eta
n}\big\},\;\eta\in(0,1]$ (see the argument before
Proposition~\ref{PropGendDiffable}, p.\,\pageref{Pagel2Gap}).

Put $d(x)=x^{-1}\ln^{-\eta}x,\;\eta\in(0,1]$, and $p=3$. The function
$d$ derives the sequence $\big\{\frac{1}{n\ln^\eta
n}\big\}$, is defined on the interval $(3,\infty)$ and is twice continuously
differentiable on it. We have:
\begin{align*}
   & d'(x)=-\tfrac{\ln x+\eta}{x^2\ln^{\eta+1} x};\\
   & \tfrac{d'(x)}{d(x)}=-{\tfrac {\ln x+\eta}{x\ln x}},
\end{align*}
from which we see that $\big(\tfrac{d'(x)}{d(x)}\big)^3=O(d_n^2).$
Hence $k=3$, and for estimating the function $F(n)$ we need to
consider the expressions:
\begin{align*}
   & d''(x)=
     \tfrac{1}{x^3}\big(\tfrac{2}{\ln^\eta x}+
                         \tfrac{3\eta-1}{\ln^{\eta+1} x}+
                         \tfrac{\eta+\eta^2}{\ln^{\eta+2} x}
                   \big);\\
   & \tfrac{d''(x)}{d^2(x)}=
     \tfrac{2\ln^\eta x}{x}+
     \tfrac{3\eta-1}{x\ln^{1-\eta}x}+
     \tfrac{\eta+\eta^2}{x\ln^{2-\eta} x}\,;\\
   & \tfrac{d'(x)^2}{d^3(x)}=
     \tfrac{\ln^{\eta} x}{x}+\tfrac{2\eta}{x\ln^{1-\eta}x}+
     \tfrac{\eta^2}{x\ln^{2-\eta}x}\,.
\end{align*}
This makes us possible to presuppose that $F$ admits the asymptotic
representation
$$
  F(n)=w_1\tfrac{\ln^\eta n}{n}+w_2\tfrac{1}{n\ln^{1-\eta}n}+O\big(\tfrac{1}{n\ln^\eta n}\big)
$$
with some coefficients $w_1,\ w_2$. In fact, by direct calculations
we can obtain that
\begin{align*}
   & \underset{n\rightarrow\infty}{\lim}\tfrac{n}{\ln^\eta n}F(n)=\tfrac{1}{4};\\
   & \underset{n\rightarrow\infty}{\lim}n\ln^{1-\eta}n\big(F(n)-\tfrac{1}{4}\tfrac{\ln^\eta n}{n}\big)=\eta;\\
   & \underset{n\rightarrow\infty}{\lim}
     n\ln^\eta n\big(F(n)-\tfrac{1}{4}\tfrac{\ln^\eta n}{n}-\tfrac{\eta}{n\ln^{1-\eta}n}\big)=
     \left\{\!\!
       \begin{array}{ll}
         0,            & \eta\in(0,1) \\
         \tfrac{1}{4}, & \eta=1
       \end{array}
     \right.,
\end{align*}
for $\eta\in(0,1]$, which gives the asymptotics:
\begin{equation}\label{EqExnlnneta2}
  F(n)=
  \Bigg\{\!\!\!
    \raisebox{0.007\textwidth}
    {
    \parbox{0.6\textwidth}
    {$
    \begin{array}{ll}
       \lefteqn{\phantom{\frac{1}{2}}}
       \tfrac{1}{4}\tfrac{\ln^\eta n}{n}+
       O\big(\tfrac{1}{n\ln^\eta n}\big),    & \eta\in(0,\tfrac{1}{2}] \\
       \tfrac{1}{4}\tfrac{\ln^\eta n}{n}+
      \tfrac{\eta}{n\ln^{1-\eta}n}+
       O\big(\tfrac{1}{n\ln^\eta n}\big),    & \eta\in (\tfrac{1}{2},1]
    \end{array}
    $}
    }
  \Bigg.
\end{equation}

Finally, we have:
$
  G(n)=
  \Bigg\{\!\!\!
    \raisebox{0.007\textwidth}
    {
    \parbox{0.32\textwidth}
    {$
    \begin{array}{ll}
       \lefteqn{\phantom{\frac{1}{2}}}
       \tfrac{1}{4}\tfrac{\ln^\eta n}{n},     & \eta\in(0,\tfrac{1}{2}]\\
       \tfrac{1}{4}\tfrac{\ln^\eta n}{n}+
       \tfrac{\eta}{n\ln^{1-\eta}n},         & \eta\in (\tfrac{1}{2},1]
    \end{array}
    $}
    }
  \Bigg.
\qquad .$ Let us summarize our considerations in the form of
sufficient conditions for the operator $\mathrm{H_{X,\alpha}}$ to
be self-adjoint.

\medskip
{\em Suppose that the Hamiltonian $\mathrm{H_{X,\alpha}}$ is
generated by the differential expression \eqref{EqlXalpha} as
explained in Introduction, with  $X=\{x_n\}_{0}^{\infty}$ defined
by the relations:
\begin{align*}
   & x_0=0,\quad x_n=x_{n-1}+d_n\quad(n\in \mathbb{N}),\\
   &  d_1>0,\quad  d_n=\tfrac{1}{n\ln^\eta n}
     \quad(n\in\overline{2,\infty}\,),\quad \eta\in(0,1].
\end{align*}
Then $\mathrm{H}_{X,\alpha}$ is self-adjoint provided that
there holds one of the following conditions:
\begin{enumerate}
   \item[\textbf{(sa1)}]\;$\sum_{n=1}^{\infty}|\alpha_n|n^{-3}\ln^{-3\eta}n=\infty\,$.
   \item[\textbf{(sa2)}]\;There exists a constant $C_1>0$ such that $$
                           \alpha_n\leq
                           {\scriptstyle
                           -2\big(n\ln^\eta n+(n+1)\ln^\eta(n+1)\big)
                           }+
                           \Bigg\{\!\!\!
                               \raisebox{0.007\textwidth}
                               {
                               \parbox{0.29\textwidth}
                               {$
                               \begin{array}{ll}
                                  \lefteqn{\phantom{\frac{1}{2}}}
                                  \tfrac{1}{4}\tfrac{\ln^\eta n}{n},    & {\scriptstyle\eta\in(0,\tfrac{1}{2}]}\\
                                  \tfrac{1}{4}\tfrac{\ln^\eta n}{n}+
                                  \tfrac{\eta}{n\ln^{1-\eta}n},         & {\scriptstyle\eta\in (\tfrac{1}{2},1]}
                               \end{array}
                               $}
                               }
                           \Bigg.\qquad
                           +\:\tfrac{C_1}{n\ln^\eta n}.
                          $$
   \item[\textbf{(sa3)}]\; There exists a constant $C_2>0$ such that
                          $$
                           \alpha_n\geq
                           \Bigg\{\!\!\!
                               \raisebox{0.007\textwidth}
                               {
                               \parbox{0.294\textwidth}
                               {$
                               \begin{array}{ll}
                                  \lefteqn{\phantom{\frac{1}{2}}}
                                  \tfrac{1}{4}\tfrac{\ln^\eta n}{n},    & {\scriptstyle\eta\in(0,\tfrac{1}{2}]}\\
                                  \tfrac{1}{4}\tfrac{\ln^\eta n}{n}+
                                  \tfrac{\eta}{n\ln^{1-\eta}n},         & {\scriptstyle\eta\in (\tfrac{1}{2},1]}
                               \end{array}
                               $}
                               }
                           \Bigg.\qquad
                           -\,\tfrac{C_2}{n\ln^\eta n}.
                          $$
\end{enumerate}
}
\end{example}

\section{Sufficient conditions for non-triviality of $n_{\pm}(\mathrm{H_{X, \alpha}})$}

For a positive sequence $\{d_n\}_1^{\infty}$, define the sequence
$\{\tilde{r}_n\}_1^{\infty}$ recursively:
\begin{equation}\label{EqrntildeRecur}
  \tilde{r}_1:=1,\quad
  \tilde{r}_{n+1}:=-\frac{d_{n+1}}{\tilde{r}_{n}}\quad(n\in \mathbb{N})
\end{equation}
(here we generalize argument from \cite[Proposition
5.13]{karpiiKost} regarding the case $d_n=1/n$).

It is easy to show by induction that
\begin{equation}\label{Eqrntilde}
   \tilde{r}_{n+1}:=(-1)^n\frac{d_{n+1}d_{n-1}\ldots}{d_{n}d_{n-2}\ldots}
   \quad(n\in \mathbb{N}).
\end{equation}

We say that a sequence $\{d_n\}_1^{\infty}$ satisfies condition $(A)$ if
$$
\{r_n\widetilde{r}_{n}\}_1^{\infty}\in \ell_2,
$$
and we say that it satisfies condition $(B)$ if
$$
  \Bigl(\frac{1}{d_n}+\frac{1}{d_{n+1}}\Bigr)\
  \tilde{r}_{n}^{2}=u_n+O(r_n^{2}\tilde{r}_{n}^{2}),
$$
where $\{u_n\}_1^\infty$ stands for a real \textit{periodic} sequence.

It follows from the results of~\cite{karpiiKost} that the
sequence $d_n=1/n$ satisfies both conditions $(A)$ and $(B)$, and
the period of the sequence $\{u_n\}_1^\infty$ equals 2;
$u_1=4/\pi,\ u_2=\pi.$ As it turns out, this result is general
enough for sequences satisfying conditions $(A)$ and $(B)$.
Namely, the following statement holds.

\begin{lemma}\label{LemPerNper2}
Suppose that a sequence $\{d_n\}_1^{\infty}\in \ell_2\backslash
\ell_1$ such that
$\lim_{n\rightarrow\infty}\frac{d_{n+1}}{d_n}=1$ satisfies also conditions
$(A)$ and $(B)$. Then the sequence $\{u_n\}_1^\infty$
has the period equal to 2, with $u_1u_2=4$.
\end{lemma}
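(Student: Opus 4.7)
The plan is to multiply two consecutive instances of condition $(B)$ and use the recursion \eqref{EqrntildeRecur} to telescope out the $\tilde r$'s, obtaining an identity of the form ``$u_n u_{n+1} + o(1) = $ (explicit quantity tending to $4$)''. Once this is established, $u_n u_{n+1} = 4$ for every $n$ (periodicity of the left-hand side forces the constant value), and then $u_{n+2} = 4/u_{n+1} = u_n$ gives period $2$, while the case $n=1$ gives $u_1 u_2 = 4$.

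Concretely, \eqref{EqrntildeRecur} yields $\tilde r_n \tilde r_{n+1} = -d_{n+1}$, and hence $\tilde r_n^{\,2} \tilde r_{n+1}^{\,2} = d_{n+1}^{\,2}$. Writing $(B)$ at $n$ and at $n+1$ and multiplying them, the left-hand side collapses to $\frac{(d_n+d_{n+1})(d_{n+1}+d_{n+2})}{d_n d_{n+2}} = \frac{r_n^{\,2} r_{n+1}^{\,2}}{d_n d_{n+2}}$. Under the hypothesis $d_{n+1}/d_n \to 1$ (which also forces $d_{n+2}/d_n \to 1$), the factors $r_n^{\,2}/d_n = 1+d_{n+1}/d_n$ and $r_{n+1}^{\,2}/d_{n+2} = d_{n+1}/d_{n+2}+1$ each tend to $2$, so the whole quantity tends to $4$. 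On the right-hand side of the product we get $(u_n + \varepsilon_n)(u_{n+1} + \varepsilon_{n+1})$ with $\varepsilon_k = O(r_k^{\,2} \tilde r_k^{\,2})$; condition $(A)$ gives $\sum (r_k \tilde r_k)^2 < \infty$, so $\varepsilon_k \to 0$, and together with the boundedness of the periodic sequence $\{u_n\}$ this expansion equals $u_n u_{n+1} + o(1)$.

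Combining the two sides we obtain $u_n u_{n+1} \to 4$. The finishing observation is that $\{u_n u_{n+1}\}$ is itself periodic with the same period as $\{u_n\}$, and a periodic real sequence that converges must be constant; hence $u_n u_{n+1} = 4$ for every $n \geq 1$, which is exactly the conclusion of the lemma. I do not foresee any real obstacle: the argument is essentially algebraic once the telescoping identity $\tilde r_n \tilde r_{n+1} = -d_{n+1}$ has been spotted. The only bookkeeping point is to confirm that the two error terms $O(r_n^{\,2}\tilde r_n^{\,2})$ and $O(r_{n+1}^{\,2}\tilde r_{n+1}^{\,2})$ indeed remain $o(1)$ after multiplication by the bounded periodic factors $u_n$, $u_{n+1}$ — which is immediate from condition $(A)$.
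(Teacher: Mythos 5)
Your proof is correct, and it is organized differently from the paper's. The paper proceeds in two stages: it first shows that the period of $\{u_n\}$ equals $2$ by comparing, inside one period of length $N$, the quantity $\rho_{s+kN}=\bigl(\tfrac{1}{d_{s+kN}}+\tfrac{1}{d_{s+kN+1}}\bigr)\tilde r_{s+kN}^{\,2}$ with $\rho_{1+kN}$ through an explicit ratio $\Theta(s,k)$ shown to tend to $1$ as $k\to\infty$ (yielding $u_s=u_1$ for odd $s$ and, analogously, $u_s=u_2$ for even $s$); only afterwards does it multiply $\rho_{2k}\rho_{2k+1}$ and pass to the limit to obtain $u_1u_2=4$. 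You extract both conclusions from the single telescoped identity
\[
\rho_n\rho_{n+1}=\Bigl(1+\frac{d_{n+1}}{d_n}\Bigr)\Bigl(1+\frac{d_{n+1}}{d_{n+2}}\Bigr)\longrightarrow 4,
\]
valid for \emph{every} $n$ (not just consecutive even/odd pairs), together with the observation that $\{u_nu_{n+1}\}$ inherits the period $N$ and a convergent periodic sequence is constant; then $u_nu_{n+1}=4$ forces $u_{n+1}\neq 0$ and $u_{n+2}=4/u_{n+1}=u_n$. This is more economical: it dispenses entirely with the $\Theta(s,k)$ computation and the case split into odd and even residues. The error bookkeeping ($\varepsilon_k=O(r_k^{\,2}\tilde r_k^{\,2})\to 0$ by condition $(A)$, boundedness of the periodic $\{u_n\}$) is handled correctly, and the one caveat common to both arguments — that ``period equal to $2$'' is to be read as ``$2$ is a period,'' since $u_1=u_2=\pm 2$ is not excluded — affects your version no more than the paper's.
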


\begin{proof}[Proof]
Let the sequence $\{d_n\}_1^{\infty}$ satisfy conditions $(A)$ and $(B)$,
and let $N$ be the period of the sequence $\{u_n\}_1^\infty$. We denote
$$
  \rho_n:= \Bigl(\frac{1}{d_n}+\frac{1}{d_{n+1}}\Bigr)\
  \tilde{r}_{n}^{2}.
$$
In view of condition $(B)$,
$\rho_{1+kN}=u_1+O(r_{1+kN}^{2}\tilde{r}_{1+kN}^{2}).$ If
$1<s<N$ is an arbitrary odd number, we have:
\begin{equation}\label{EqLemPerNper21}
   \rho_{s+kN}=\Bigl(\frac{1}{d_{s+kN}}+\frac{1}{d_{s+kN+1}}\Bigr)\
               \Bigl(\frac{d_{s+kN}d_{s+kN-2}\ldots}{d_{s+kN-1}
               d_{s+kN-3}\ldots}\Bigr)^2=\Theta(s,k)\rho_{1+kN},
\end{equation}
where
$$
  \Theta(s,k)=\Bigl(\frac{1}{d_{s+kN}}+\frac{1}{d_{s+kN+1}}\Bigr)\
              \Bigl(\frac{1}{d_{1+kN}}+\frac{1}{d_{2+kN}}\Bigr)^{-1}\Bigl(\frac{d_{s+kN}
              d_{s+kN-2}\ldots d_{3+kN}}{d_{s+kN-1}
              d_{s+kN-3}\ldots d_{2+kN}}\Bigr)^2,
$$
and also $\lim_{k\rightarrow \infty} \Theta(s,k)=1.$ Since
$\rho_{s+kN}=u_s+O(r_{s+kN}^{2}\tilde{r}_{s+kN}^{2})$, then, by passing to the limit
in relation~\eqref{EqLemPerNper21} at $k\rightarrow \infty$, we obtain
$$
  u_s=u_1.
$$
Thus, for an arbitrary odd $n=2k+1$ we have:
$$\rho_{2k+1}=u_1+O(r_{2k+1}^{2}\tilde{r}_{2k+1}^{2}).$$

Arguing similarly we can show that for an arbitrary even $n=2k $ we obtain
$$
  \rho_{2k }=u_2+O(r_{2k}^{2}\tilde{r}_{2k}^{2}).
$$
Since
$$
  \rho_{2k}\rho_{2k+1}=
  \Bigl(1+\frac{d_{2k+1}}{d_{2k+2}}\Bigr)\Bigl(1+\frac{d_{2k+1}}{d_{2k}}\Bigr),
$$
we arrive at the relation
$$
  \bigl(u_1+O(r_{2k+1}^{2}\tilde{r}_{2k+1}^{2})\bigr)\bigl(u_2+O(r_{2k}^{2}\tilde{r}_{2k}^{2})\bigr)=
  \Bigl(1+\frac{d_{2k+1}}{d_{2k+2}}\Bigr)\Bigl(1+\frac{d_{2k+1}}{d_{2k}}\Bigr).
$$
Finally, by passing to the limit in the latter at $k\rightarrow \infty$, we
conclude that
$$
  u_1u_2=4.
$$
\end{proof}

\begin{proposition}\label{PropHamSym}
Suppose that the Hamiltonian $\mathrm{H}_{X,\alpha}$ is defined by the sequence
$\{d_n\}_1^{\infty}\in \ell_2\backslash \ell_1$ such that
$\lim_{n\rightarrow\infty}\frac{d_{n+1}}{d_n}=1$ and for which conditions
$(A)$ and $(B)$ are satisfied.

If
$$
\alpha_n=a\Bigl(\frac{1}{d_n}+\frac{1}{d_{n+1}}\Bigr)+O(d_n),
$$
where the parameter $a$ satisfies the inequality $-2<a<0$, then the
Hamiltonian $\mathrm{H}_{X,\alpha}$ is the symmetric operator
with deficiency indices $n_{\pm}=1.$
\end{proposition}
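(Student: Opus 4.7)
The plan is to use the identity $n_\pm(\mathrm{H}_{X,\alpha})=n_\pm(B_{X,\alpha})$ from \cite[Theorem~5.4]{karpiiKost} together with the bound $n_\pm(B_{X,\alpha})\le 1$ noted in the introduction: it suffices to exhibit two linearly independent $\ell^2$ solutions of the recurrence $B_{X,\alpha}\xi=0$, placing $B_{X,\alpha}$ in the limit circle case. With off-diagonal entries $a_n=-r_n^{-1}r_{n+1}^{-1}d_{n+1}^{-1}$ and diagonal entries $b_n=r_n^{-2}(\alpha_n+1/d_n+1/d_{n+1})$, the key substitution is $\xi_n=r_n\tilde r_n\eta_n$. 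Using the identity $\tilde r_{n-1}\tilde r_n=-d_n$ that follows from \eqref{EqrntildeRecur}, a direct calculation yields the cancellation $a_{n-1}r_{n-1}\tilde r_{n-1}=a_nr_{n+1}\tilde r_{n+1}=1/(r_n\tilde r_n)$. Multiplying $B_{X,\alpha}\xi=0$ by $r_n\tilde r_n$ and inserting the hypothesis $\alpha_n=a(1/d_n+1/d_{n+1})+O(d_n)$ together with condition $(B)$ reduces the equation to
\[
  \eta_{n-1}+c_n\eta_n+\eta_{n+1}=0,\qquad c_n=(a+1)u_n+\epsilon_n,
\]
with $\epsilon_n=O(r_n^2\tilde r_n^2)$; condition $(A)$ then makes $\{\epsilon_n\}$ absolutely summable.

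By Lemma~\ref{LemPerNper2} the sequence $\{u_n\}$ has period $2$ with $u_1u_2=4$, so the unperturbed recurrence is a $2$-periodic Jacobi equation with transfer matrices $M_n=\bigl(\begin{smallmatrix}-(a+1)u_n&-1\\1&0\end{smallmatrix}\bigr)$ of determinant $1$. Its monodromy $M=M_2M_1$ has trace $(a+1)^2u_1u_2-2=4(a+1)^2-2$, which lies in $[-2,2)$ for $a\in(-2,0)$, so the spectrum of $M$ sits on the unit circle. The endpoint value $-2$ is attained only at $a=-1$, where $c_n\equiv 0$ and one computes directly that $M=-I$, which is semisimple. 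Hence in every case $a\in(-2,0)$ the monodromy is diagonalizable with spectral radius $1$, the partial products $M_k\cdots M_1$ are uniformly bounded in norm, and every solution of the unperturbed recurrence is bounded.

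The perturbed transfer matrices $T_n=M_n+E_n$ satisfy $\|E_n\|=O(|\epsilon_n|)$ and $\sum_n\|E_n\|<\infty$. A discrete Gronwall estimate applied to the identity
\[
  T_n\cdots T_1-M_n\cdots M_1=\sum_{k=1}^n(T_n\cdots T_{k+1})\,E_k\,(M_{k-1}\cdots M_1)
\]
propagates boundedness from the unperturbed products to the perturbed ones, so the full recurrence admits two linearly independent bounded solutions $\eta$. For any such $\eta$, condition $(A)$ gives $\xi_n=r_n\tilde r_n\eta_n\in\ell^2$, producing two $\ell^2$ solutions of $B_{X,\alpha}\xi=0$; hence $B_{X,\alpha}$ is in the limit circle case and $n_\pm(\mathrm{H}_{X,\alpha})=n_\pm(B_{X,\alpha})=1$. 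I expect the main technical difficulty to be the perturbation step at the critical parameter $a=-1$: there $M=-I$ has a repeated eigenvalue, so the standard Levinson–Benzaid–Lutz theorem does not apply verbatim, but the semisimplicity of $-I$ is exactly what makes the direct product estimate above go through uniformly across the whole interval $(-2,0)$.
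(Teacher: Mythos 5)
Your argument is correct and rests on the same central device as the paper's proof: conjugating $B_{X,\alpha}$ by $\mathrm{diag}(r_n\tilde r_n)$ so that, by \eqref{EqrntildeRecur}, the off-diagonal entries become $1$ and the diagonal becomes $(a+1)u_n$ up to a summable error, and then invoking Floquet theory for the resulting two-periodic Jacobi recurrence with discriminant $\Delta_a(0)=2(a+1)^2-1$. The genuine difference is in how the error is absorbed. The paper isolates the reference sequence $\alpha_n^0=-(1/d_n+1/d_{n+1})+(a+1)u_n\tilde r_n^{-2}$, for which the conjugated matrix is \emph{exactly} the periodic matrix $J_a$, concludes $n_\pm(B_{X,\alpha^0})=1$, and then notes that the actual $\alpha$ differs from $\alpha^0$ by a bounded self-adjoint perturbation, which leaves deficiency indices unchanged; you instead keep the $\ell^1$ error $\epsilon_n=O(r_n^2\tilde r_n^2)$ inside the recurrence and control the perturbed transfer-matrix products directly. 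The operator-theoretic route is shorter; yours is more self-contained at the level of the difference equation and, importantly, is more careful at $a=-1$: there $\Delta_{-1}(0)=-1$, so the paper's literal assertion that $|\Delta_a(0)|<1$ for all $a\in(-2,0)$ fails at that single point, and one must argue, as you do, that the monodromy is $-I$, hence semisimple, so all solutions remain bounded --- a small but real gap in the published argument that your version closes. Two details to tidy: your telescoping identity involves the tail products $T_n\cdots T_{k+1}$, so it is cleaner to write the variation-of-constants formula as $P_n=Q_n+\sum_{k\le n}Q_nQ_k^{-1}E_kP_{k-1}$ and use that $\|Q_k^{-1}\|=\|Q_k\|$ for unimodular $2\times2$ matrices before applying the discrete Gronwall inequality; and you are right to insist on producing \emph{two} independent $\ell^2$ solutions, since a single $\ell^2$ solution of $B\xi=0$ would not by itself exclude the limit-point case (it could merely signal an eigenvalue at $0$), whereas the paper leaves this point implicit.
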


\begin{proof}[Proof]
Due to Lemma~\ref{LemPerNper2}, for the given sequence $d_n$,
the real periodic sequence $\{u_n\}_1^\infty$ in condition $(B)$
has the period equal to 2, with $u_1u_2=4$.

Consider the sequence
$$
\alpha_n^0:=-\Bigl(\frac{1}{d_n}+\frac{1}{d_{n+1}}\Bigr)+(a+1)u_n\tilde{r}_n^{-2}.
$$
It follows that
$$
B_{X,\alpha^0}=\left(%
\begin{array}{cccc}
 r_1^{-2}\tilde{r}_1^{-2} (a+1)u_1 & -r_1^{-1}r_2^{-1}d_2^{-1}    & 0    & \ldots  \\
 -r_1^{-1}r_2^{-1}d_2^{-1}   & r_2^{-2}\tilde{r}_2^{-2}(a+1)u_2 & -r_2^{-1}r_3^{-1}d_3^{-1}    & \ldots  \\
  0    & -r_2^{-1}r_3^{-1}d_3^{-1}    & r_3^{-2}\tilde{r}_3^{-2}(a+1)u_1 & \ldots \\
  \ldots  & \ldots  & \ldots  & \ldots \\
\end{array}%
\right).
$$
If $R_X=\mathrm{diag}(r_n),\
\widetilde{R}_1=\mathrm{diag}(\widetilde{r}_n),$ we have
$$
\widetilde{R}_1R_XB_{X,\alpha^0}R_X\widetilde{R}_1=\left(%
\begin{array}{cccc}
 (a+1)u_1 & -\tilde{r}_1\tilde{r}_2d_2^{-1}    & 0    & \ldots  \\
 -\tilde{r}_1\tilde{r}_2d_2^{-1}   & (a+1)u_2 & -\tilde{r}_2\tilde{r}_3d_3^{-1}    & \ldots  \\
  0    & -\tilde{r}_2\tilde{r}_3d_3^{-1}    & (a+1)u_1 & \ldots \\
  \ldots  & \ldots  & \ldots  & \ldots \\
\end{array}%
\right).
$$
Since $-\tilde{r}_n\tilde{r}_{n+1}d_{n+1}^{-1}=
-\tilde{r}_n\cdot \frac{-d_{n+1}}{\tilde{r}_n}\cdot
\frac{1}{d_{n+1}}=1,$ then
\begin{equation}\label{EqPropHamSym1}
\widetilde{R}_1R_XB_{X,\alpha^0}R_X\widetilde{R}_1=J_a,
\end{equation}
where
$$
J_a=\left(%
\begin{array}{cccc}
  (a+1)u_1 & 1    & 0    & \ldots  \\
  1    & (a+1)u_2 & 1    & \ldots  \\
  0    & 1    & (a+1)u_1 & \ldots  \\
  \ldots  & \ldots  & \ldots  & \ldots \\
\end{array}%
\right)
$$
is the periodic Jacobi matrix defined by both the sequence
$\{u_n\}_1^\infty$ and the real parameter $a.$

In view of~\eqref{EqPropHamSym1}, the vector $f^0$ is the solution of the equation
$J_af=0$ if and only if the vector $h^0=
R_X\widetilde{R}_1f^0$ is the solution of the equation $B_{X,\alpha^0}h=0.$
If $f^0=\{f_n\}$ is a bounded sequence, due to condition $(A)$
we obtain that $h^0=\{r_n\tilde{r}_nf_n\}\in \ell_2.$

As is known~\cite{karpiiTes}, solutions to the equation $J_af=0$
are bounded if there holds the inequality $|\Delta_a(0)|<1$ for
the Floquet discriminant. It follows from the above considerations
that the Jacobi matrix determined by the sequence
$\{u_n\}_1^\infty$ has the period equal to 2. Consequently,
$$
  \Delta_a(\lambda)
  =1/2\bigl(-2+(\lambda-(a+1)u_1)(\lambda-(a+1)u_2)\bigr).
$$
This yields
$\Delta_a(0)=1/2(-2+(a+1)^2u_1u_2)=1/2(-2+4(a+1)^2)=2(a+1)^2-1.$
Hence, $|\Delta_a(0)|< 1$ if $-2<a<0.$ Thus, under this
condition, a solution to the equation $B_{X,\alpha^0}h=0$ belongs
to $\ell_2$, and also $B_{X,\alpha^0}$ is symmeric operator with
deficiency indices $n_{\pm}=1.$

We can simplify the general form of the sequence $\alpha^0$.
Indeed, condition $(B)$ implies that
$$
(a+1)u_n\tilde{r}_n^{-2}=(a+1)\Bigl(\frac{1}{d_n}+\frac{1}{d_{n+1}}\Bigr)+
(a+1)O(r_n^{2}\tilde{r}_{n}^{2})\tilde{r}_n^{-2},
$$
where
$(a+1)O(r_n^{2}\tilde{r}_{n}^{2})\tilde{r}_n^{-2}=O(r_n^{2})=O(d_n).$
It follows that
$$
(a+1)u_n\tilde{r}_n^{-2}=(a+1)\Bigl(\frac{1}{d_n}+\frac{1}{d_{n+1}}\Bigr)+
O(d_n),
$$
and the sequence $\alpha$ of the form
$$
  \alpha_n=a\Bigl(\frac{1}{d_n}+\frac{1}{d_{n+1}}\Bigr)+ O(d_n)
$$
derives a bounded self-adjoint perturbation of the operator
$B_{X,\alpha^0}.$ Since deficiency indices don't change under such
a perturbation, we conclude that $B_{X,\alpha}$ is symmetric
operator with deficiency indices $n_{\pm}=1$ as well.
\end{proof}

\begin{example}\label{Ex1ngamma}
Let $d_n=\tfrac{1}{n^{\gamma}}\;\:(n\in\mathbb{N}),\
\gamma\in(\tfrac{1}{2},1]$. For the given sequence, we have:
$$
  \tilde{r}_{n}=(-1)^{n-1}\big(\tfrac{(n-1)!!}{n!!}\big)^{\gamma}
  \quad(n\in\mathbb{N}),
$$
which implies that
\begin{equation}\label{EqEx1ngamma1}
  \tilde{r}_{n}^2=
  \big(\tfrac{(n-1)!!}{n!!}\big)^{2\gamma}=
  \tfrac{1}{(2n+1)^{\gamma}}
  \big((2n+1)\big(\tfrac{(n-1)!!}{n!!}\big)^2\big)^{\gamma}
  \quad(n\in\mathbb{N}).
\end{equation}

Let us use the asymptotics
\begin{equation}\label{EqEx1ngamma2}
   (2n+1)\big(\tfrac{(n-1)!!}{n!!}\big)^2=
   \bigg\{\!\!\!
    \raisebox{0.005\textwidth}
    {
    \parbox{0.294\textwidth}
    {$
    \begin{array}{ll}
       \pi + O(n^{-2}),            & \text{if}\  n\  \text{is \textit{odd}};   \\
       \tfrac{4}{\pi} + O(n^{-2}), & \text{if}\  n\  \text{is \textit{even}}
    \end{array}
    $}
    }
  \bigg.
\end{equation}
obtained in \cite[Proposition 5.13]{karpiiKost}, and also the
asymptotics derived by the chain
\begin{equation}\label{EqEx1ngamma3}
   \tfrac{n^{\gamma}+(n+1)^{\gamma}}{(2n+1)^{\gamma}}=
   \tfrac{1+\big(1+\tfrac{1}{n}\big)^\gamma}
         { \strut
           2^\gamma\big(1+\tfrac{1}{2n}\big)^\gamma
         }=
   \tfrac{1}{2^\gamma}
   \big(2+\tfrac{\gamma}{n}+O(n^{-2})\big)
   \big(1-\tfrac{\gamma}{2n}+O(n^{-2})\big)=
   2^{1-\gamma}+O(n^{-2}).
\end{equation}

We have several relations ($n\in\mathbb{N}$):
\begin{equation*}
   d_n=n^{-\gamma}\sim r_n^2\,;\qquad
   n^{-2}=O(n^{-2\gamma})\,;\qquad
   \tilde{r}_{n}^{2}
   \overset{(\ref{EqEx1ngamma1},\,\ref{EqEx1ngamma2})}{=}
   O(n^{-\gamma})\,.
\end{equation*}
They immediately yield that
\begin{equation}\label{EqEx1ngamma4}
   \text{a)}\;\:
   n^{-2}=O(d_n\tilde{r}_{n}^2)\,;\qquad
   \text{b)}\;\:
   r_{n}^2\tilde{r}_{n}^2=O(n^{-2\gamma})
\end{equation}
(here $n\in\mathbb{N}$). In view of (\ref{EqEx1ngamma4}b), condition
$(A)$ holds for the sequence $\{d_n\}_1^{\infty}$. Moreover,
putting $w:=(\pi,\,\tfrac{4}{\pi},\,\pi,\,\tfrac{4}{\pi},\ldots)$
and $u_n:=2^{1-\gamma}w_n^\gamma\;\;(n\in\mathbb{N})$, we obtain
the chain:
\begin{equation*}
  \begin{split}
      \big(\tfrac{1}{d_n}+\tfrac{1}{d_{n+1}}\big)\tilde{r}_{n}^{2}
    & \overset{\eqref{EqEx1ngamma1}}{=}
      \tfrac{n^{\gamma}+(n+1)^{\gamma}}{(2n+1)^{\gamma}}
      \big((2n+1)\big(\tfrac{(n-1)!!}{n!!}\big)^2\big)^\gamma
      \overset{(\ref{EqEx1ngamma2},\,\ref{EqEx1ngamma3})}{=}\\
    & \overset{\phantom{\eqref{EqEx1ngamma1}}}{=}
      \big(2^{1-\gamma}+O(n^{-2})\big)
      \big(w_n+O(n^{-2})\big)^\gamma
      \overset{(\ref{EqEx1ngamma4}a)}{=}
      u_n+O(d_n\tilde{r}_{n}^2)
      \quad(n\in\mathbb{N}).
  \end{split}
\end{equation*}
We combine the essay of these considerations with particular case
\textbf{(sa2)}, \textbf{(sa3)} of Example~\ref{Exngammalneta} for
$\eta=0$ (see p.\,\pageref{PageExngammalneta}) in order to
demonstrate the dependence of the Hamiltonian
$\mathrm{H}_{X,\alpha}$ on an asymptotic behavior of the sequence
$\alpha$.

\medskip
{\em Suppose that the Hamiltonian $\mathrm{H_{X,\alpha}}$ is
generated by the differential expression \eqref{EqlXalpha} as
explained in Introduction, with  $X=\{x_n\}_{0}^{\infty}$ defined
by the relations:
\begin{equation*}
   x_0=0,\quad x_n=x_{n-1}+\tfrac{1}{n^\gamma}
   \quad(n\in \mathbb{N}),
   \quad\gamma\in(\tfrac{1}{2},1].
\end{equation*}
Then we have:

if\;\:$
      \alpha_n\leq
     -2\big(n^{\gamma}+(n+1)^{\gamma}\big)+
      C_1n^{-\gamma}
      \;\:(n\in\mathbb{N})
      $%
\ for some $C_1>0,$\\
\phantom{\kern\parindent{if}\;\:%
         $\alpha_n\geq -C_2n^{-\gamma}\;\:(n\in\mathbb{N})$ for some $C_2>0$,%
        }%
\;then $\mathrm{H_{X,\alpha}}$ is self-adjoint;

if\;\:$
       \alpha_n =
       a\big(n^{\gamma}+(n+1)^{\gamma}\big)+O(n^{-\gamma})
       \;\:(n\in\mathbb{N})
      $%
\ for some $a\in(-2,0)$,\\
\phantom{\kern\parindent{if}\;\:%
         $\alpha_n\geq -C_2n^{-\gamma}\;\:(n\in\mathbb{N})$ for some $C_2>0$,%
        }%
\;then $n_{\pm}(\mathrm{H_{X,\alpha}})=1$;

if\;\:$\alpha_n\geq -C_2n^{-\gamma}\;\:(n\in\mathbb{N})$ for some $C_2>0$,%
\;then $\mathrm{H_{X,\alpha}}$ is self-adjoint. }
\end{example}

\medskip

We are grateful to M.\,M.~Malamud and A.\,S.~Kostenko for the statement of problem and useful advices.

\medskip

\renewcommand{\refname}{References}

 \end{document}